\newtheoremstyle{break}
  {}
  {}
  {\itshape}
  {}
  {\bfseries}
  {.}
  {\newline}
  {}
\newtheorem{thm}{Theorem} [section]
\newtheorem{lemma}[thm]{Lemma}
\newtheorem{prop}[thm]{Proposition}
\theoremstyle{remark}
\newtheorem*{remark}{Remark}
\theoremstyle{definition}
\DeclareMathOperator*{\esssup}{ess\,sup}
\DeclareMathOperator{\id}{id}
\DeclareMathOperator{\tr}{tr}
\DeclareMathOperator{\dist}{dist}
\DeclareMathOperator{\SO}{SO}
\DeclareMathOperator{\bdy}{bdy}
\DeclareMathOperator*{\argmin}{arg\,\min}
\newcommand{\mr}{\mathbb R}
\newcommand{\R}{\mathbb R}
\newcommand{\td}{\mathrm{d}}
\newcommand{\sym}{\mathrm{sym}\,}
\newcommand{\skewsym}{\mathrm{skew}\,}
\newcommand{\Lip}{M}
\newcommand{\admissible}{\mathcal A}
\newcommand{\energyfunction}{\mathcal E}
\newcommand{\lina}{\mathbb A^h}
\newcommand{\axl}{\mathrm{axl}}
\newcommand{\twist}{\mathrm{t}}
\newcommand{\pro}{\mathfrak{p}}
\newcommand{\I}{\mathrm {I}}
\newcommand{\II}{\mathrm {II}}
\newcommand{\III}{\mathrm {III}}
\newcommand{\eps}{\varepsilon}
\newcommand{\weakly}{\rightharpoonup}
\newcommand{\down}{\searrow}
\let\union\cup
\DeclarePairedDelimiter\abs{\lvert}{\rvert}
\DeclarePairedDelimiter\norm{\lVert}{\rVert}
\DeclarePairedDelimiter\set{\{}{\}}
\DeclarePairedDelimiter\paren{(}{)}
\DeclarePairedDelimiter\brackets{[}{]}
\DeclarePairedDelimiterXPP\Lpnorm[2]{}\lVert\rVert{_{L^{#1}}}{#2}
\DeclarePairedDelimiterXPP\Lebesgue[1]{\mathcal L^1}(){}{#1}
\DeclarePairedDelimiterXPP\Lebesguee[1]{\mathcal L^2}(){}{#1}
\DeclarePairedDelimiterXPP\Lebesgueee[1]{\mathcal L^3}(){}{#1}
\DeclarePairedDelimiterXPP\Lebesguen[1]{\mathcal L^n}(){}{#1}
\DeclarePairedDelimiterXPP\Hausdorff[1]{\mathcal H^1}(){}{#1}
\providecommand{\charact}[1]{\chi_{_{#1} }}
\newcommand\restrict[2]{{
  \left.\kern-\nulldelimiterspace 
  #1 
  \vphantom{\big|} 
  \right|_{#2} 
  }}
\renewcommand*\env@matrix[1][*\c@MaxMatrixCols c]{%
  \hskip -\arraycolsep
  \let\@ifnextchar\new@ifnextchar
  \array{#1}}
\begin{document}
\author{Matthäus Pawelczyk \\ FB Mathematik, TU Dresden \\01062 Dresden (Germany) }
\date{}
\title{Convergence of equilibria for bending-torsion models of rods with inhomogeneities}
{\let\newpage\relax\maketitle}
\begin{abstract}
We prove that, in the limit of vanishing thickness, equilibrium configurations of inhomogeneous,
three-dimensional non-linearly elastic rods converge to equilibrium configurations of the variational limit theory. 
More precisely, we show that, as $h \down 0$, stationary points of the energy $E^h$, for a rod $\Omega_h \subset \R^3$ with cross-sectional diameter $h$, subconverge to
stationary points of the $\Gamma$-limit of $E^h$, provided that the bending energy of the sequence scales appropriately. 
This generalizes earlier results for homogeneous materials to the case of materials with (not necessarily periodic) inhomogeneities.
\end{abstract}
\noindent {\bfseries Keywords:}
elasticity, dimension reduction, homogenization, convergence of equilibria

\noindent {\bfseries 2000 Mathematics Subject Classification:}
74K10, 74B20, 74G10, 74E30, 74Q05
\section{Introduction}

The derivation of asymptotic models for two or three-dimensional elastic objects by lower-dimensional models has a long history, going back as far as to Bernoulli~\cite{bern1692} and Euler~\cite{euler1744}. Both considered thin rods, but starting from a two-dimensional model instead of the three-dimensional one, as we study here. Since then a multitude of such models has been proposed, some incompatible with each other. They usually depend on strong a priori assumptions. An in-depth study of the early history can be found in~\cite{levien08}. 

\vspace{\baselineskip}

We start with the nonlinear three-dimensional model: Let $\Omega_h \subset \R^3$ be the reference configuration of a thin elastic body, with `thickness' $h > 0$. The stored elastic energy of a deformation $y: \Omega_h \to \R^3$ is then given by
\[
  E^h(y) := \int_{\Omega_h} W( \nabla y(x)) \td x,
\]
where $W$ is the \textit{elastic energy density}; typical assumptions on $W$ are similar to those provided in~\ref{(M1)}--\ref{(M4)}. One is interested in the limiting behaviour of $E^h$ as $h\down0$.
One of the first results in terms of $\Gamma$-convergence were for $\Omega_h := \omega \times (-h,h)$ with $\omega \subset \R^2$.
Roughly speaking $\Gamma$-convergence is equivalent to the convergence of global minimizers $y^h$ of $E^h$, possibly perturbed by some forcing term, to global minimizers of some limiting energy.
For example in~\cite{LDR96} \textit{the theory for membranes}, i.e., the limit for $h^{-1} E^h$ was obtained, in~\cite{FJM02} the \textit{bending theory for plates}, i.e., for $h^{-3} E^h$.
The latter result contains, as a particular case, the model proposed by Bernoulli and Euler. Further scalings $h^{-\alpha} E^h$ were later studied in~\cite{FJM06}. In this present paper we study rods with small cross-sectional diameter. So in our case the reference configuration is $\Omega_h := (0,L) \times h\omega$ for some $L > 0$ and $\omega \subset \R^2$. The \textit{bending-torsion theory for rods}, i.e., the $\Gamma$-limit for $h^{-3} E^h$ was obtained by~\cite{MM02}. 
Under the additional assumption of a linear stress growth, the result was strengthened in~\cite{MM08} by proving that also stationary points $y^h$ of $E^h$ subconverge to stationary points of the $\Gamma$-limit.

All the previous mentioned results were obtained in the case of a single, homogeneous material. In~\cite{N10} the first $\Gamma$-convergence result for a rod in this regime, i.e. $h^{-3}E^h$, with inhomogeneities was proved. This was done under the assumption that the inhomogeneity was periodic, rapidly oscillating and only depending on the `in-plane' variable $x_1 \in (0,L)$. All these additional assumptions can be dropped, as was shown in~\cite{MV16}. In the present paper we extend the result of~\cite{MV16} by showing that also stationary points subconverge to stationary points of the $\Gamma$-limit.

In~\cite{BVP17} the more linear case of $h^{-5} E^h$, called the \textit{von K\'arm\'an model}, was studied, and $\Gamma$-convergence and convergence of stationary points was proved. This result, and the one presented here, heavily depend on methods developed in~\cite{MV16,VvK17}.

\vspace{\baselineskip}

Now we turn to the precise mathematical description. Let $L > 0$ and let $\omega \subset \R^2$ be open, bounded, connected. The (scaled) energy of a non-homogeneous rod with length $L$ and cross-section $h\omega$ and external forces $g \in L^2((0,L), \R^3)$, deformed by $y: [0,L] \times h \omega \to \R^3$, is given by
\begin{equation*}\begin{aligned}
  \widetilde \energyfunction^h( y ) = \frac 1 {h^4} \int_{ [0,L] \times h\omega} W^h\paren[\big]{ (x_1, h^{-1} x'), \nabla y(x)} \td x - \frac 1 {h^2}\int_{[0,L] \times h\omega} g(x_1) \cdot y(x) \td x.
\end{aligned}\end{equation*}
The hypotheses on the elastic energy density $W^h : [0,L] \times \omega\times \R^{3\times 3}\to[0,\infty)$ are listed in Section~\ref{subsec:assumptions}. After performing the usual change of variables $(x_1, x_2, x_3)\mapsto (x_1, hx_2, hx_3)$ the rod $\Omega_h$ scales to $\Omega := \Omega_1$ and we obtain 
\begin{equation}\begin{aligned}\label{eq:energyhrescaled}
  \energyfunction^h( y ) = \frac 1 {h^2} \int_{ [0,L] \times \omega } W^h(x, \nabla_h y(x)) \td x - \int_{[0,L]\times \omega} g(x_1) \cdot y(x) \td x,
\end{aligned}\end{equation}
where $\nabla_h = (\partial_1, \frac 1h \partial_2, \frac 1 h \partial_3)$.
As already mentioned, in~\cite{MV16} the $\Gamma$-convergence of $\energyfunction^h$ along a subsequence to a limiting functional $\energyfunction^0$ was proved. This limit is given by
\begin{equation}\begin{aligned}\label{eq:energydefinition}
  \energyfunction^0( y, d_2, d_3 ) & := \begin{cases}
  \int_0^L Q_1^0( x_1, R^T(x_1)R'(x_1)) -g(x_1) \cdot y(x_1) \td x_1 & \text{ if } (y,d_2,d_3) \in \mathcal A, \\
\infty & \text{ else,}
                                      \end{cases}
\end{aligned}\end{equation}
where $Q_1^0$ is a quadratic form in the second argument, which will be introduced in Proposition~\ref{prop:density}; the class of limiting deformations~$\admissible$ is given by
\begin{equation}\begin{aligned}\label{eq:admissibleset}
  \admissible := &\set {(y, d_2, d_3) \in W^{2,2}((0,L),\R^3)\times W^{1,2}{((0,L),\R^3)} \times W^{1,2}((0,L), \R^3):\\
          &\hspace{6cm} \quad (y', d_2, d_3) \in W^{1,2}( (0,L),\SO(3)) },
\end{aligned}\end{equation}
equipped with the strong $W^{2,2}\times W^{1,2} \times W^{1,2}$-topology, 
and $R = (y', d_2, d_3)$ is the rotation associated with $( y, d_2, d_3 )$.

Formally the first variation of the energy functional $\energyfunction^h$ in direction of some test function $\psi: [0,L]\times \omega \to \R^3$ is given by
\begin{equation}\begin{aligned}\label{eq:variationenergyh}
  D\energyfunction^h( y )[\psi] :=\frac 1 {h^2} \int_{ [0,L] \times \omega } DW^h(x, \nabla_h y(x)):\nabla_h \psi \td x - \int_{[0,L]\times \omega} g(x_1) \cdot \psi(x) \td x.
\end{aligned}\end{equation}
For the first integral to be well-defined, however, we need to impose linear stress growth, i.e., for any $F \in \R^{3\times3}$ we require the inequality $\abs{DW^h(\cdot, F)}\leq L(\abs F + 1)$ to hold. Deformations $y$ satisfying $D\energyfunction^h(y)[\psi] = 0$ for all test functions $\psi$ are said to be stationary.  
 If we impose the boundary condition $y(0,x') = (0, hx')$ then the natural class of test functions $\psi$ in~\eqref{eq:variationenergyh} are $C^{\infty}(\overline {\Omega}, \R^3)$ maps, which vanish at $\set 0 \times \omega$; we denote this class by $C^\infty_{\bdy}(\overline {\Omega}, \R^3)$.
Another notion of stationary points exists, introduced by J.~Ball in~\cite{JB84}, which does not need linear stress growth, and is furthermore compatible with physical growth, i.e., $W(F) \to \infty$ if $\det F \down 0$ and $W(F) = \infty$ if $\det F \leq 0$. 
In~\cite{DM12} the convergence of such stationary points for the von K\'arm\'an rod (for homogeneous materials) was shown. Due to the highly inhomogeneous material we will need to stay in the first setting. Regardless of the notion of stationarity, and even for homogeneous materials, the existence of stationary points is a subtle issue, see~\cite[section 2.2, section 2.7]{JB02}.

\vspace{\baselineskip}

For $\alpha,\beta, \Lip$ positive constants with $\alpha\leq \beta$ we denote by $\mathcal W(\alpha,\beta, \Lip)$ the set of admissible density functions $W^h$; the precise definition of the class $\mathcal W(\alpha,\beta, \Lip)$ is given by~\ref{enum:S1}--\ref{enum:S3} below.
We can now state the main result of this paper:
\begin{thm}

\label{thm:main}
Let $(W^h) \subset \mathcal W(\alpha,\beta, \Lip)$, $g \in L^2((0,L), \R^3)$ and $(y^h) \subset W^{1,2}(\Omega, \mr^3)$, such that $y^h(0,x_2,x_3) = (0, hx_2, hx_3)$ for any $h >0$, and furthermore
\begin{equation}\begin{aligned}\label{eq:energyboundinthm}
\limsup_{h\down 0} \frac 1 {h^2} \int_\Omega W^h(x, \nabla_h y^h(x)) \,\td x < \infty.
\end{aligned}\end{equation}
Assume in addition, that each $y^h$ is a stationary point of $\energyfunction^h$, given in~\eqref{eq:energyhrescaled}, subject to natural boundary conditions, i.e., $D\energyfunction^h[y^h][\psi]=0$ for all $\psi \in C^\infty_{\bdy}(\overline \Omega,\R^3)$. 
 Then there exists $(\overline y, \overline d_2, \overline d_3 )\in \admissible$, such that, up to a subsequence, $y^h \to \overline y$ strongly in $W^{1,2}(\Omega, \R^3)$ as $h\down 0$, and 
\[
  \nabla_h y^h \to (\overline y', \overline d_2, \overline d_3) \quad \text{ strongly in } L^2(\Omega, \R^{3\times3}).
\]
Furthermore $\overline y(0) = 0$, $\overline d_k(0) = e_k$ for $k=2,3$, and $(\overline y, \overline d_2, \overline d_3) $ is a stationary point of $\energyfunction^0$, where $\energyfunction^0$ is given in~\eqref{eq:energydefinition}.
\end{thm}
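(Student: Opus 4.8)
\emph{Step 1: Compactness.} I would begin exactly as in the $\Gamma$-convergence analysis of \cite{MV16}. The energy bound \eqref{eq:energyboundinthm} and the coercivity of $W^h$ give $\int_\Omega \dist^2(\nabla_h y^h,\SO(3))\,\td x \le C h^2$, and the geometric rigidity estimate in its one-dimensional (rod) form produces $R^h \in W^{1,2}((0,L),\SO(3))$ with $\norm{\nabla_h y^h - R^h}_{L^2} \le C h$ and $\norm{(R^h)'}_{L^2}\le C$. Along a subsequence $R^h \weakly R$ in $W^{1,2}$, hence uniformly, with $R \in W^{1,2}((0,L),\SO(3))$; $y^h \to \bar y$ in $W^{1,2}(\Omega,\R^3)$ and $\nabla_h y^h \to R=(\bar y',\bar d_2,\bar d_3)$ in $L^2$, with $(\bar y,\bar d_2,\bar d_3)\in\admissible$; the boundary data pass to the limit by trace continuity, so $\bar y(0)=0$, $\bar d_k(0)=e_k$. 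The scaled strain $G^h:=h^{-1}((R^h)^T\nabla_h y^h-\Id)$ is bounded in $L^2$, so along a further subsequence $G^h\weakly G$; moreover the cross-sectional fluctuation $w^h:=h^{-1}(y^h- y^h_\omega)$, where $y^h_\omega(x_1):=\tfrac1{|\omega|}\int_\omega y^h(x_1,\cdot)$, converges \emph{strongly} in $L^2(\Omega,\R^3)$ to $x_2\bar d_2+x_3\bar d_3$ (its $x'$-gradient is exactly the $2,3$-columns of $\nabla_h y^h$, which converge strongly), while $y^h_\omega\to\bar y$ in $W^{1,2}((0,L),\R^3)$.

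\emph{Step 2: The scaled stress and the truncation.} Put $\sigma^h:=h^{-1}DW^h(\cdot,\nabla_h y^h)$. By frame indifference $\sigma^h=R^h\,h^{-1}DW^h(\cdot,\Id+hG^h)$, and since $\Id$ is a stress-free minimiser, a Taylor expansion formally gives $\sigma^h=R^h\,\mathbb L^h(\cdot)[\sym G^h]+(\text{error})$ with $\mathbb L^h$ the linearisation of $W^h$ at $\Id$. The first delicate point is that only \emph{linear} stress growth is available, so the expansion is legitimate only where $hG^h$ is small; I would run the standard truncation, splitting $\Omega$ into the good set $\{|hG^h|\le h^{1/2}\}$ (on which $\sigma^h$ is controlled in $L^2$ and the error is negligible) and its complement, whose measure is $o(1)$ and whose contribution is dominated using the linear growth of $DW^h$ and the $L^1$-smallness of $\dist^2(\nabla_h y^h,\SO(3))$. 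This yields $\sigma^h$ bounded in $L^2$, hence (a further subsequence, along which the limiting material data of \cite{MV16} exist) $\sigma^h\weakly\sigma$ with $R^T\sigma$ symmetric.

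\emph{Step 3: Passing to the limit in the Euler--Lagrange equation.} The core is to let $h\down0$ in $D\energyfunction^h(y^h)[\psi]=0$ for two families of admissible $\psi$. (a) \emph{Cross-sectional test fields} $\psi^h:=h^2\phi$, $\phi\in C^\infty_{\bdy}(\overline{\Omega},\R^3)$: since $\nabla_h(h^2\phi)=h(h\partial_1\phi\mid\partial_2\phi\mid\partial_3\phi)$, stationarity reads $\int_\Omega\sigma^h:(h\partial_1\phi\mid\partial_2\phi\mid\partial_3\phi)\,\td x=h^2\!\int g\cdot\phi$, and in the limit $\int_\Omega\sigma:(0\mid\partial_2\phi\mid\partial_3\phi)\,\td x=0$ for all $\phi$. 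Combined with $R^T\sigma=\mathbb L[\sym G]$ this is precisely the Euler--Lagrange equation of the cross-sectional minimisation defining $Q_1^0$ (cf.\ Proposition~\ref{prop:density}); hence $\sym G(x_1,\cdot)$ is its minimiser, which identifies the cross-sectional resultant moment of $\sigma$ with the differential of $Q_1^0(x_1,\cdot)$ at $R^T(x_1)R'(x_1)$. (b) \emph{Frame-indifferent test fields}: for $A\in C^\infty([0,L],\R^{3\times3})$ skew-symmetric with $A(0)=0$, take $\psi^h(x):=A(x_1)y^h(x)$ (possibly adjusted by a correction of order $h^2$). Because $DW^h(x,F):AF=0$ \emph{identically} for skew $A$, and $\nabla_h(Ay^h)=A\nabla_h y^h+A'y^h\otimes e_1$, the leading term disappears \emph{without error} and stationarity reduces to $\tfrac1{h^2}\int_\Omega[DW^h(x,\nabla_h y^h)e_1]\cdot[A'(x_1)y^h(x)]\,\td x=\int_\Omega g\cdot Ay^h\,\td x$. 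Here $\int_\Omega g\cdot Ay^h\to\int_\Omega g\cdot A\bar y$. On the left, decompose $y^h=y^h_\omega+hw^h$: the $y^h_\omega$-part pairs against $\tfrac1{h^2}\int_\omega DW^h e_1\,\td x'=|\omega|\int_{x_1}^{L}g$ — the \emph{force balance}, obtained by testing stationarity against $\chi(x_1)c$, $\chi(0)=0$, with the natural condition at $x_1=L$ — which disposes of the $h^{-1}$; the $w^h$-part, after an integration by parts using the traction-free conditions on $\partial\omega\times(0,L)$ and at $x_1=L$ and $A(0)=0$, and after using the \emph{moment balance} (symmetry of $DW^h(\nabla_h y^h)(\nabla_h y^h)^T$) to absorb a further apparently singular term, reduces to $\sigma^h$ tested against curl-free strains, against the strongly convergent $w^h$, and against $G^h$; the last, genuinely weak$\times$weak, product is handled by a compensated-compactness argument (via $\div_h\sigma^h=-hg\to0$) together with part (a). Passing to the limit, the resulting family of equations over all such $A$ — together with (a) — is, as in \cite{MM08,MV16}, the weak form of the Euler--Lagrange system of $\energyfunction^0$ at $(\bar y,\bar d_2,\bar d_3)$, i.e.\ $D\energyfunction^0(\bar y,\bar d_2,\bar d_3)=0$; this completes the proof.

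\emph{Where the difficulty lies.} The analytic heart is Step 3(b): the first variation carries a prefactor $h^{-2}$ (and $h^{-1}$ after one exact cancellation) in front of quantities that are only $O(1)$ a priori. The resolution couples (i) frame indifference, which removes the leading term \emph{exactly}; (ii) the force and moment balances, which upgrade the residual terms from $O(1)$ to $O(h)$; (iii) compensated compactness exploiting $\div_h\sigma^h=-hg$; (iv) the strong convergence of $w^h$; and (v) the truncation of Step 2, forced on us because only linear stress growth is assumed. A subsidiary but unavoidable nuisance is that stationary $y^h$ are merely $W^{1,2}$ while test fields must be smooth; I would handle this by mollifying $y^h$ (and $R^h$) inside the test functions with a parameter $\delta=\delta(h)\to0$ chosen slowly enough (e.g.\ $\delta=h^2$) that the errors still vanish after division by $h$. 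Throughout one stays within the linear-stress-growth notion of stationarity, and it is precisely the machinery of \cite{MV16,VvK17} for inhomogeneous limit passages that makes the identification of $\sigma$ and of $Q_1^0$ work without any periodicity assumption.
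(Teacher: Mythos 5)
Your Steps 1 and 2 are essentially correct and match the paper's approach: compactness via geometric rigidity, the scaled stress $\sigma^h=R^hE^h$, and a truncation argument to handle the Taylor error under linear stress growth (the paper packages this as Lemma~\ref{lem:weakvanishingofzeta}, which additionally records the crucial fact that the error pairs to zero against any equi-integrable sequence — this stronger form is used repeatedly later and mere $L^2$-boundedness of $\sigma^h$ would not suffice). Step~3(b) is also close in spirit, though the paper tests with $\psi^h=R^h\Phi\,\pro$ rather than $A(x_1)y^h(x)$, which keeps the test field smooth and avoids the mollification acrobatics you mention.

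The genuine gap is in the identification of the limit, i.e.\ the second half of Step~3. In~3(a) you assert that the cross-sectional stationarity condition $\int_\Omega\sigma:(0\mid\partial_2\phi\mid\partial_3\phi)=0$, combined with $R^T\sigma=\mathbb L[\sym G]$, identifies $\sym G(x_1,\cdot)$ as the minimiser of ``the cross-sectional minimisation defining $Q_1^0$''. This is the explicit stress-strain relation available in the homogeneous case (and the one exploited in~\cite{MM08}), but in the inhomogeneous, non-periodic setting there is \emph{no pointwise cell problem defining $Q_1^0$}. As Proposition~\ref{prop:density} shows, $Q_1^0$ arises only as a density of the abstract functional $\mathcal K_{(h)}$, whose evaluation requires the $h$-dependent relaxation sequences $(\psi^h_m)$ of Lemma~\ref{lem:orthabstract}; there is no $h$-independent corrector and hence no closed-form cross-sectional Euler--Lagrange equation characterizing $\sym G$. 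Consequently, the closing claim of~3(b) — that the resulting family of equations ``is, as in \cite{MM08,MV16}, the weak form of the Euler--Lagrange system of $\energyfunction^0$'' — is precisely what must be \emph{proved}, and it is the paper's main technical content. Concretely, the paper shows that the fixed part $m_d$ of $\sym G$ equals $m(A,b_{\min}(A))$, that the corrector $\psi^h$ in the decomposition~\eqref{eq:symGhdecomp} can be replaced (after passing to an equi-integrable modification on sets $O^h$ and a boundary-layer cut-off) by the abstract relaxation sequence $\psi^h_{m_d}$, and then invokes the Fr\'echet-derivative formula~\eqref{eq:BderivativeofK0} to recognize the limit as $\paren*{\partial_B\mathcal K^0_{(h)}}(A)[\cdot]$. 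The orthogonality estimate Lemma~\ref{lem:orthequi} (a consequence of the Euler--Lagrange equation applied to carefully chosen test fields built from the Griso-type decomposition of Proposition~\ref{prop:grisoa}) is what makes this replacement possible. None of this appears in your proposal; ``compensated compactness via $\div_h\sigma^h=-hg$'' together with the force/moment balances is not a substitute, since the object to be identified is a variational limit involving relaxation sequences rather than a pointwise constitutive law.
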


\begin{remark}
It is easily seen that there are sequences $(y^h)$ satisfying the boundary conditions $y^h(0, x_2, x_3) = hx_2 e_2 + hx_3 e_3$ such that~\eqref{eq:energyboundinthm} holds.
Thus an application of Poincar\'e's inequality shows that~\eqref{eq:energyboundinthm} holds automatically for a minimizing sequence $(y^h)$.
\end{remark}
\begin{remark}
The theorem also holds true for the more general forces $\widetilde g \in L^2( \Omega,\R^3)$. For this the forces in the limiting energy must be replaced by the mean of $\widetilde g$ on $\omega$, i.e., by $\int_\omega g(\cdot, x') \td x'$.
The more general statement can be proved identically, up to a few additional error terms, but which converge trivially to zero for $h\down 0$.
\end{remark}

The proof of Theorem~\ref{thm:main} is split into two main parts. 
For the first one we follow closely the paper~\cite{MM08}, where the corresponding result for the homogeneous rod was proved. 
Their methods for studying the stress can also be applied, with minor modifications, in the more general case considered here. 
Furthermore we use additional cancellation effects, which simplifies parts of their proof. 
To conclude their proof they exploit an explicit, linear relationship between the limiting stress and strain, which allows to easily identify the limit equations. 
In the inhomogeneous case addressed here, such a relationship is less clear and the identification of the limit equation is more involved. 
Thus for the second part we apply results and methods developed in~\cite{BVP17} to identify the limit equation and conclude the proof.

\section{Preliminaries}
\subsection{Notation}
Let $x = (x_1, x') \in \R^3$, and let $\pro (x) = (0, x') \in \R^3$ be the projection of $x$ onto $\set 0 \times \omega$. Let ${(e_i)}_{i=1}^3$ be the standard basis of $\R^3$. By $(\cdot)$ we denote the inner product on $\R^3$ and by $(:)$ the inner product on $\R^{3\times 3}$, i.e., $A : B =\tr (A^T B)$ for any $A,B \in \R^{3\times 3}$, with $\tr$ being the trace.
The twist function $\twist: L^1(\Omega)^2 \to L^1(0,L)$ is given by $\twist(\phi, \psi)(x_1) = \int_\omega x_3 \phi(x_1,x') - x_2\psi(x_1,x')\td x'$.
We denote by $\iota: \mr^3 \to \mr^{3\times3}$ the natural inclusion $\iota(v) = v \otimes e_1$, by $\axl: \mr^{3\times3}_{\skewsym} \to \mr^3$ the axial vector 
$\axl(A) = (-A_{23}, A_{13}, -A_{12})$ and by $\id_{3\times 3}$ we denote the $3\times 3$ identity matrix. By $()'$ we denote the derivative with respect to $x_1$,  by $\nabla = (\partial_1, \partial_2, \partial_3)$ the gradient with respect to $x$ and for every $h>0$ we define the scaled gradient as $\nabla_h = (\partial_1, \frac 1 h\partial_2, \frac 1 h \partial_3 )$. 
For the test functions we define $C^\infty_{\bdy}([0,L]) = \set { f\in C^\infty([0,L]) | f(0) = 0 }$ and $C^\infty_{\bdy}(\overline \Omega) := C^\infty_{\bdy}([0,L], C^\infty(\overline \omega))$, and $W^{1,2}_{\bdy}([0,L]) = \set { f\in W^{1,2}([0,L]) | f(0) = 0 }$ and $W^{1,2}_{\bdy}(\Omega) = W^{1,2}_{\bdy}([0,L], W^{1,2}(\omega))$.

\subsection{The nonlinear bending-torsion theory for beams}\label{subsec:assumptions}
Let $L > 0$ and let $\omega \subset \R^2$ be an open, bounded, connected Lipschitz-domain with $\Lebesguee{\omega} = 1$ and which is centered, i.e.,
\begin{equation}\begin{aligned}\label{eq:omegacentered}
  \int_\omega x_2 x_3 \,\td x_2 \td x_3
 = 
  \int_\omega x_2 \, \td x_2 \td x_3 = 
  \int_\omega x_3 \,\td x_2 \td x_3
 = 0.
\end{aligned}\end{equation}
The reference domain $\Omega$ is given by $\Omega = (0,L) \times \omega$. The assumption on the elastic energy density $W$ are as follows:

Let $\alpha, \beta, \Lip$ be positive constants with $\alpha \leq \beta$. The class $\mathcal W(\alpha,\beta, \Lip)$ 
contains all differentiable functions $W : \R^{3\times 3} \to [0, \infty)$ that satisfy:
\begin{enumerate}[label= (M\arabic*)]
\item\label{(M1)} Frame indifference: $W(RF) = W(F)$ for all $F \in \mr^{3\times 3}$ and $R \in \SO(3)$.
\item\label{(M2)} Non-degeneracy and continuity:
\begin{align*}	
   \alpha \dist^2(F, \SO(3)) \leq W(F) \leq \beta  \dist^2(F, \SO(3))& \quad \text{ for all } F \in \R^{3\times3}.
\end{align*}
Note that this implies the minimality at the identity, i.e., $W(\id_{3\times3}) = 0$.
\item\label{(M4)} Linear stress growth: For the derivative $DW$ of $W$ we have: $\abs { DW(F) } \leq \Lip(\abs F + 1)$ for all $F \in \R^{3\times 3}$.
\end{enumerate}
\begin{remark}
The condition~\ref{(M4)} is needed here for the first term in the first variation of $\energyfunction^h$, given in~\eqref{eq:variationenergyh}, to be well-defined, and thus the condition appears in similar form in~\cite{MM08,MMS06}. It is however not needed for results concerning $\Gamma$-convergence, e.g.,~\cite{MV16}. There also the upper bound~\ref{(M2)} is only needed locally, i.e.,
\begin{align*}	
\exists \rho, \beta' > 0 : \; W(F) \leq \beta'  \dist^2(F, \SO(3))& \; \text{ for all } F \in \R^{3\times3} \text{ with } \dist(F, \SO(3)) \leq \rho.
\end{align*}
It is however easily seen that this local upper bound together with linear stress growth implies the global estimate~\ref{(M2)} for some $\beta > 0$.
\end{remark}

Let now $\alpha,\beta, \Lip$ be as above. A family of energy densities ${(W^h)}_{h >0}$, $W^h: \Omega \times \mr^{3\times3} \to [0,\infty)$ describes 
an \textit{admissible composite material of class $\mathcal W(\alpha,\beta,\Lip)$} if for every $h > 0$ it holds:
\begin{enumerate}[label= (S\arabic*)]
\item\label{enum:S1} $W^h$ is a Borel function on $\Omega \times \R^{3\times3}$.
\item  $W^h(x,\cdot) \in \mathcal W(\alpha,\beta, \Lip)$ for almost every $x \in \Omega$.
\item\label{enum:S3} There exist a monotone function $r: [0,\infty] \to [0, \infty]$ and quadratic forms $Q^h: \Omega \times \R^{3\times3} \to [0,\infty)$ such that $r(\eps) \down 0$ if $\eps \down 0$ and
\[
   \esssup_{x \in \Omega} \abs { W^h(x, \id_{3\times 3} + G) - Q^h(x, G)} \leq r( \abs G) \abs G^2 \quad \text{ for all } G \in \mr^{3\times 3}.
\]
\end{enumerate}
Let $(Q^h)$ be the family of corresponding quadratic forms associated with a family $(W^h) \subset \mathcal W(\alpha, \beta, L)$, then it is easy to see that
for every $h > 0$ we have:

 $Q^h$ is a Carath\'eodory function, which for almost every $x \in \Omega$ satisfies
\begin{equation}\begin{aligned}\label{eq:Qproperties}
&\alpha\abs{ \sym F}^2 \leq Q^h(x, F) = Q^h(x, \sym F) \leq \beta \abs {\sym F}^2            &&\quad \text{ for all } F \in \mr^{3\times3}, \\
    &\abs{ Q^h(x, F_1) - Q^h(x, F_2)} \leq \beta \abs {\sym F_1 - \sym F_2}&& \\
  &\hspace{5cm}\cdot\abs{\sym F_1 + \sym F_2}&&\quad \text{ for all } F_1,F_2 \in \mr^{3\times3}.
    \end{aligned}\end{equation}
Let $\lina$ denote the linear, symmetric, positive semidefinite operator associated with the quadratic forms $Q^h$, i.e., $Q^h(F) = \frac 1 2 \lina F: F$ for all $F \in \R^{3\times3}$.

In~\cite[proposition 4.1]{MM08} the following compactness result was shown:
\begin{prop}\label{thm:compactness}
Let $(u^h) \subset W^{1,2}(\Omega,\mr^3)$ be a sequence satisfying
\begin{equation}\begin{aligned}\label{eq:energybound}
\limsup_{h\down 0} \frac 1 {h^2} \int_\Omega \dist^2\paren[\big]{\nabla_h u^h, \SO(3)}\td x < \infty.
\end{aligned}\end{equation}
Then there exists a constant $C>0$, depending only on the domain $\Omega$, and a sequence $(R^h) \subset C^\infty([0,L], \SO(3))$, such that
\begin{align}	
  \label{eq:estimatedifferenceutorot}\norm { \nabla_h u^h - R^h}_{L^2(\Omega)} \leq Ch \\
  \label{eq:estimatederivativeR}\norm{ (R^h)' }_{L^2((0,L))} + h\norm{ (R^h)'' }_{L^2((0,L))} \leq C
\end{align}
for every $h  > 0$. If, in addition, $u^h(0,x_2,x_3) = (0,hx_2, hx_3)$, then
\begin{equation}\begin{aligned}\label{eq:Rbdry}
  \abs{ R^h(0) - \id } \leq C \sqrt h.
\end{aligned}\end{equation}
\end{prop}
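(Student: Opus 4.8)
The plan is to reduce the statement to the rigidity estimate of Friesecke--James--Müller applied slice-by-slice along the $x_1$-direction, in the spirit of~\cite{MM02,MM08}. First I would fix $h>0$ and cover $(0,L)$ by overlapping intervals $I_a = (a-h, a+h)\cap(0,L)$ of length comparable to $h$, so that on each cylinder $I_a\times\omega$ the rescaled gradient $\nabla_h u^h$ has, after undoing the anisotropic scaling, a genuine three-dimensional gradient on a set of diameter $\sim h$; more precisely one works with $z^h(x_1,x') := u^h(x_1, h x')$ on $(0,L)\times\omega$ whose ordinary gradient is $(\partial_1 z^h \mid \nabla' z^h) = \nabla_h u^h$ evaluated suitably, so that the hypothesis~\eqref{eq:energybound} becomes an $L^2$-smallness-of-order-$h$ bound on $\dist(\nabla z^h,\SO(3))$ over each $I_a\times\omega$. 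The quantitative rigidity estimate then produces, for each $a$, a constant rotation $Q^h_a\in\SO(3)$ with $\norm{\nabla_h u^h - Q^h_a}_{L^2(I_a\times\omega)} \le C\norm{\dist(\nabla_h u^h,\SO(3))}_{L^2(I_a\times\omega)}$, the constant $C$ depending only on $\omega$ by scale-invariance of the rigidity inequality and the Lipschitz character of the domain.

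Next I would patch these local rotations into a single map $x_1 \mapsto \widetilde R^h(x_1)$: comparing $Q^h_a$ and $Q^h_b$ on the overlap $I_a\cap I_b$ and using the triangle inequality yields $|Q^h_a - Q^h_b|\le C\,\bigl(\text{local energy}\bigr)/\sqrt h$ for adjacent intervals, and summing the squares of these increments over a partition of $(0,L)$ gives that the piecewise-constant (or piecewise-affine interpolated) rotation field has $\norm{(\widetilde R^h)'}_{L^2(0,L)}\le C$, which is~\eqref{eq:estimatederivativeR} for the derivative term; the bound $\norm{\nabla_h u^h - \widetilde R^h}_{L^2(\Omega)}\le Ch$ then follows by combining the local estimates with the $\sqrt h$-sized jumps. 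To upgrade from the rough field $\widetilde R^h$ (which a priori need not even be $\SO(3)$-valued after interpolation, nor smooth) to a genuine $C^\infty([0,L],\SO(3))$ curve, I would first project $\widetilde R^h$ pointwise onto $\SO(3)$ (licit since it is already $O(h)$-close in an averaged sense to rotations, hence close to $\SO(3)$ off a small set, and one handles the exceptional set by the standard truncation) and then mollify at scale $h$ in $x_1$; mollification preserves the $L^2$-bound on the first derivative, produces the factor-$h$ bound $\norm{(R^h)''}_{L^2}\le C/h$ in~\eqref{eq:estimatederivativeR} (one derivative hits the mollifier kernel), and changes $R^h$ by at most $O(h)$ in $L^2$ so that~\eqref{eq:estimatedifferenceutorot} survives.

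Finally, for the boundary estimate~\eqref{eq:Rbdry}: the clamped condition $u^h(0,x') = (0,hx_2,hx_3)$ forces $\nabla_h u^h(0,\cdot) = \id_{3\times3}$ on the cross-section $\{0\}\times\omega$ in the trace sense, so applying the local estimate on the first cylinder $I_0 = (0,h)\times\omega$ and combining $\norm{\nabla_h u^h - Q^h_0}_{L^2(I_0\times\omega)}\le C\sqrt h\cdot(\text{bounded})$ with a trace inequality on $I_0\times\omega$ — whose constant, by the scaling of the cylinder, contributes another factor — gives $|Q^h_0 - \id|\le C\sqrt h$; then $|R^h(0) - \id|\le |R^h(0) - Q^h_0| + |Q^h_0 - \id|$, and the first term is controlled by the mollification estimate, giving the claim. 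The main obstacle, and the place requiring the most care, is the bookkeeping of $h$-powers through the anisotropic rescaling, the patching, and especially the two mollification/projection steps, so that all three displayed inequalities come out with the stated powers of $h$ simultaneously and with a constant depending only on $\Omega$; this is exactly the technical content of~\cite[Proposition~4.1]{MM08}, which we may cite, and the argument here is essentially a transcription of that proof.
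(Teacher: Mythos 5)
The paper does not prove this proposition; it simply cites \cite[Proposition~4.1]{MM08}, and your sketch is indeed a faithful reconstruction of the rigidity--patching--mollification argument used there, which you also explicitly acknowledge. The overall scheme (slice $(0,L)$ into cylinders of aspect ratio one, apply the quantitative FJM rigidity estimate on each, patch the local rotations, mollify at scale $h$ to gain smoothness and the $h\|(R^h)''\|_{L^2}\leq C$ bound) is correct.

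There is, however, one genuine inaccuracy in the boundary step. The clamped condition $u^h(0,x')=(0,hx_2,hx_3)$ does \emph{not} force $\nabla_h u^h(0,\cdot)=\id_{3\times3}$ in the trace sense: the boundary datum only prescribes the \emph{tangential} derivatives, so it fixes the second and third columns $\tfrac1h\partial_2 u^h(0,\cdot)=e_2$, $\tfrac1h\partial_3 u^h(0,\cdot)=e_3$, while the normal derivative $\partial_1 u^h(0,\cdot)$ is entirely free (and, being merely an $L^2$ function, has no classical trace, so the ``trace inequality on $I_0\times\omega$'' you invoke does not apply to the full gradient as stated). The estimate $|Q_0^h-\id|\leq C\sqrt h$ is nonetheless reachable: one compares $u^h$ on $(0,h)\times\omega$ with the affine map $a^h(x)=c^h+x_1 Q_0^h e_1+hx_2 Q_0^h e_2+hx_3 Q_0^h e_3$ using Poincar\'e and a trace inequality for the \emph{function} $u^h-a^h$ (not its gradient), obtains $\|u^h(0,\cdot)-a^h(0,\cdot)\|_{L^2(\omega)}\leq Ch^{3/2}$, and then projects onto $x_2$ and $x_3$ using the centering~\eqref{eq:omegacentered} to get $|Q_0^h e_k-e_k|\leq C\sqrt h$ for $k=2,3$; since $Q_0^h\in\SO(3)$ the first column is then automatically $\sqrt h$-close to $e_1$ as well. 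You should replace the ``$\nabla_h u^h(0,\cdot)=\id$'' step with this argument (or simply defer to the cited proof).
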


The following observations are standard, and follow the approach taken in~\cite{MM08}. 
Let $(y^h)$ be the sequence of deformations satisfying the assumptions of Theorem~\ref{thm:main}. The non-degeneracy assumption~\ref{(M2)} implies that $(y^h)$ satisfies~\eqref{eq:energybound}. Thus by the previous proposition there exists a sequence $(R^h)$ satisfying~\eqref{eq:estimatedifferenceutorot} and~\eqref{eq:estimatederivativeR}. By using the frame-indifference of $W^h$ we have
\begin{align*}	
  W^h(\cdot, \nabla_h y^h) = W^h(\cdot, (R^h)^T \nabla_h y^h) &= W^h\paren*{\cdot, \id_{3\times 3} +h \frac{ (R^h)^T \nabla_h y^h -\id_{3\times 3} }h}\\
  &= W^h\paren*{\cdot, \id_{3\times 3} +h G^h},
\end{align*}
where we introduced
\begin{equation}\begin{aligned}\label{eq:Ghdefinitionu}
  G^h = \frac { (R^h)^T \nabla_h y^h - \id_{3\times3}} h.
\end{aligned}\end{equation}
The estimate~\eqref{eq:estimatedifferenceutorot} implies that $(G^h)$ is uniformly bounded in $L^2$.
We define $z^h$ implicitly by introducing the ansatz
\begin{equation}\begin{aligned}\label{eq:zhintro}
  y^h(x) = \int_0^{x_1} R^h(s) e_1 \td s+ hx_2 R^h(x_1) e_2 + h x_3 R^h(x_1) e_3 + hz^h(x).
\end{aligned}\end{equation}

Inserting this ansatz into~\eqref{eq:Ghdefinitionu} we can calculate  that
\begin{equation}\begin{aligned}\label{eq:GsplittingMM}
  G^h = \frac { (R^h)^T \nabla_h y^h - \id_{3\times3}} h 
      =\iota\paren[\big]{  A^h \pro} + (R^h)^T\nabla_h z^h,
\end{aligned}\end{equation}
where we introduced $A^h := (R^h)^T (R^h)'$. Clearly $(A^h)$ is uniformly bounded in $L^2$, and since $(G^h)$ is uniformly bounded in $L^2$ as well, the sequence $(\nabla_h z^h)$ is uniformly bounded in $L^2$. Furthermore on $\set 0 \times \omega$ we have the boundary conditions $y^h(x) = hx_2 e_2 + hx_3 e_3$, and thus also we can assume~\eqref{eq:Rbdry} holds. With this we obtain $\abs { z^h } \leq C\sqrt h$ on $\set 0 \times \omega$. By applying Poincar\'e's inequality we can now find a uniform bound on the $L^2$-norm of $z^h$, and thus on the $W^{1,2}$-norm of $z^h$.

Thus, after extracting a subsequence, which we will not relabel, we have in $L^2$ the weak convergences 
\[
  G^h \weakly G, \quad, \quad A^h \weakly A \quad\text{ and }\quad (R^h)^T\nabla_h z^h \weakly R (\partial_1 z | q_2 | q_3)
\]
for some $G \in L^2(\Omega, \R^{3\times3})$, $z \in W^{1,2}(\Omega,\R^3)$, $A \in L^2((0,L), \R^{3\times3}_{\skewsym})$ and $q_2,q_3 \in L^2(\Omega, \R^{3})$. Notice that the uniform $L^2$ bound on $( \frac 1 h\partial_2 z^h, \frac 1 h \partial_3 z^h )$  implies that $z$ does not depend on $x_2,x_3$. Thus going to the limit in~\eqref{eq:GsplittingMM} we obtain
\begin{equation*}\begin{aligned}
G(x) = \iota\paren[\big]{p(x_1) + A(x_1) \pro(x)} + R(x_1)^T (0 | q_2(x) | q_3(x) ),
  \end{aligned}\end{equation*}
where for brevity we set $p := R^T \partial_1 z \in L^2((0,L), \R^3)$. Next we focus on $\sym G^h$. In~\cite[proof of theorem 2.15]{MV16} it is shown that there exist sequences $v^h \subset W^{1,2}(\Omega, \R^3)$, $(\Psi^h) \subset W^{1,2}((0,L),\R^{3\times3}_{\skewsym})$ and $o^h \subset L^2(\Omega, \R^{3\times3})$ such that 
\begin{equation*}\begin{aligned}
  \sym G^h = \sym \iota\paren[\Big] { A\pro + p_1 e_1} + \sym \iota \paren[\Big]{(\Psi^h)'\pro} + \sym \nabla_h v^h + o^h,
\end{aligned}\end{equation*}
and such that $( \nabla_h v^h )$ is uniformly bounded in $L^2$, and $ \Psi^h \weakly 0$ in $W^{1,2}((0,L), \mr^{3\times3})$,  $v^h \weakly 0$ in $W^{1,2}(\Omega, \R^3)$ and $o^h \to 0$ strongly in $L^2(\Omega,\R^{3\times3})$.

We define the {\itshape fixed part} $m_d$ by
\begin{equation}\begin{aligned}\label{eq:definitionmd}
m_d := A\pro + p_1 e_1
\end{aligned}\end{equation}
and the {\itshape corrector sequence} $(\psi^h)$ by
 \begin{equation}\begin{aligned}\label{eq:definitioncorrector}
 \psi^h(x) = \Psi^h \pro - \frac 1 h \paren*{ \widehat \Psi^h_{12}e_2 +   \widehat \Psi^h_{13}e_3}   + v^h,
 \end{aligned}\end{equation}
where $\widehat \Psi^h(x_1) = \int_0^{x_1} \Psi^h(s) \td s$. Direct calculation yields
\[
  \nabla_h \psi^h = \begin{pmatrix} (\Psi^h)' \pro - \frac 1h \Psi^h_{12} e_2 - \frac 1 h \Psi^h_{13}e_3, \quad \frac 1 h \Psi^he_2, \quad \frac 1 h \Psi^h e_3\end{pmatrix} + \nabla_h v^h,
\]
as well as
\begin{equation}\begin{aligned}\label{eq:sympsidecomp}
\sym \nabla_h \psi^h = \sym \iota \paren[\Big]{(\Psi^h)'\pro} + \sym \nabla_h v^h.
\end{aligned}\end{equation}
Thus we have
\begin{equation}\begin{aligned}\label{eq:symGhdecomp}
  \sym G^h = \sym \iota\paren[\big] { m_d} + \sym  \nabla_h \psi^h + o^h,
\end{aligned}\end{equation}
with easily verifiable strong convergences
\[
  (\psi^h_1, h \psi^h_2, h\psi^h_3) \to 0 \; \text{ in } L^2( \Omega, \mr^3) \quad \text{ and } \quad \twist(\psi^h_2, \psi^h_3) \to 0  \text{ in } L^2((0,L)).
\]

\subsection{The $\Gamma$-limit}
We will briefly introduce the variational approach developed in~\cite{MV16}, with which the $\Gamma$-convergence for the inhomogeneous rod was proved. A similar variational approach for thin elastica was used earlier in~\cite{BFF00} for the membrane model. The approach was also already adapted and used in~\cite{BVP17} to show the convergence of stationary points for the inhomogeneous von K\'arm\'an rod.

\vspace{\baselineskip}

By applying the frame indifference~\ref{(M1)} and Taylor expansion~\ref{enum:S3} we obtain
\begin{equation}\begin{aligned}\label{eq:Wtaylorexp}
\frac 1 {h^2}W^h(\cdot, \nabla_h y^h) & =\frac 1 {h^2} W^h\paren*{\cdot, \id_{3\times 3} +h G^h}\\
                                      & \approx \frac 1 {h^2}Q^h(\cdot, hG^h) = Q^h(\cdot, G^h)= Q^h(\cdot, \sym G^h).
\end{aligned}\end{equation}
This motivates, together with the decomposition~\eqref{eq:symGhdecomp}, the definitions
\begin{align*}	
  \mathcal K^-_{(h)}(m,O):= \inf \set*{ \liminf_{h\down 0} \int_{O \times \omega} Q^h(x, \iota(m) + \nabla_h \psi^h) \td x},\\
  \mathcal K^+_{(h)}(m,O):= \inf \set*{ \limsup_{h\down 0} \int_{O \times \omega} Q^h(x, \iota(m) + \nabla_h \psi^h) \td x},
\end{align*}
where we take the infimum over all sequences $(\psi^h) \subset W^{1,2}(O\times \omega,\mr^3)$ such that
\[
  (\psi^h_1, h \psi^h_2, h\psi^h_3) \to 0 \; \text{ strongly in } L^2(O \times \omega, \mr^3) \quad \text{ and } \quad \twist(\psi^h_2, \psi^h_3) \to 0 \quad \text{ in } L^2(O).
\]

It is proved in~\cite[lemma~2.6]{MV16} that there exists a subsequence, still denoted by $(h)$, such that:
\begin{equation}\begin{aligned}\label{eq:defabstrlimit}
\forall m\in L^2(\Omega,\R^3), \forall O \subset [0,L] \text{ open } : \mathcal K_{(h)}(m,O) := \mathcal K^-_{(h)}(m,O) = \mathcal K^+_{(h)}(m,O).
\end{aligned}\end{equation}
This can be done by extracting a diagonal sequence such that $\mathcal K^-_{(h)}$ and $\mathcal K^+_{(h)}$ agree on a dense, countable subset of $L^2$ and of open subsets of $(0,L)$. Utilizing the continuity of the maps $L^2(\Omega, \mr^3) \to \mr$, $m \mapsto \mathcal K^-_{(h)}(m,O),m\mapsto \mathcal K^+_{(h)}(m,O)$ for any open set $O\subset(0,L)$, proved in~\cite[lemma~2.5]{MV16}, it is then easy to see that~\eqref{eq:defabstrlimit} holds.

We now introduce the relaxation sequence and state its most important properties, which were proved in~\cite{BVP17} and~\cite{MV16}:
\begin{lemma}\label{lem:orthabstract}
Let $(h) \subset (0, \infty)$ with $h \down 0$ be a sequence such that~\eqref{eq:defabstrlimit} holds true.
Then there exists a subsequence (not relabeled) such that for every $m\in L^2(\Omega, \mr^2)$ there exists
$(\psi^h_m)\subset W^{1,2}(\Omega,\mr^2)$, with the property that for every open set $O \subset (0,L)$ we have
\begin{equation}\begin{aligned}\label{eq:propofpsim}
\mathcal K_{(h)}(m,O):= \lim_{h\down 0} \int_{O \times \omega} Q^h(x, \iota(m) + \nabla_h \psi^h_m) \td x,
  \end{aligned}\end{equation}
and $(\psi^h_m)$ satisfies the following properties:
\begin{enumerate}[label=\emph{(\alph*)}]
\item $(\psi^h_m \cdot e_1, h\psi^h_m \cdot e_2, h\psi^h_m \cdot e_3) \to 0$ and $\twist(\psi^h_m \cdot e_2, \psi^h_m \cdot e_3) \to 0$ strongly in $L^2$.
\item The sequence $(\abs{ \sym \nabla \psi^h_m }^2)$ is equi-integrable, and there exist sequences 
$B^h_m \subset W^{1,2}( (0,L),\R^{3\times3}_{\skewsym})$ and $(\vartheta^h_m) \subset W^{1,2}(\Omega,\R^3)$ with $B^h_m \to 0$, $\vartheta^h_m \to 0$ strongly in their respective $L^2$-norm,
and
\[
 \sym \nabla_h \psi^h_m = \sym\, \iota \paren[\big]{(B^h_m)'\pro} + \sym \nabla_h \vartheta^h_m.
\]
Moreover, for a subsequence $( \abs { (B^h_m)'}^2)$ and $(\abs{ \nabla_h \vartheta^h_m}^2 )$ are equi-integrable, and the following inequality holds for some $ C>0$ independent of $O \subset (0,L)$:
\[
  \limsup_{h \down 0} \paren* { \norm{ B_m^h  }_{W^{1,2}(O)} + \norm{ \nabla_h \vartheta^h_m}_{L^2(O\times \omega)} } \leq C ( \beta \norm { m }_{L^2(O \times \omega)}^2 + 1).
\]
\item If $(\widehat \psi^h) \subset W^{1,2}(\Omega, \mr^3)$ is any other sequence that satisfies (a) and $(\sym \nabla_h \widehat \psi^h)$ is bounded in $L^2(\Omega,\mr^{3\times3})$,
then
\[
  \lim_{h \down 0} \int_\Omega \lina (\iota(m) + \nabla_h \psi^h_m) : \sym \nabla_h \widehat \psi^h \td x = 0.
\]

\item If $(\widehat \psi^h) \subset W^{1,2}(\Omega,\mr^3)$ is any sequence that satisfies~\eqref{eq:propofpsim} and (a), then
\[
  \norm { \sym \nabla_h \psi^h_m - \sym \nabla_h \widehat \psi^h }_{L^2(\Omega)} \to 0,
\]
and $(\abs { \sym \nabla_h \widehat \psi^h}^2 )$ is equi-integrable.

\item\label{enum:EL} The map $\mathcal K_{(h)}( \cdot, (0,L)): L^2(\Omega, \mr^3) \to \R$ is continuously Fr\'echet-differentiable, and for every $m,n \in L^2(\Omega, \mr^3)$ we have
\begin{equation}\begin{aligned}\label{eq:mderivativeK}
\frac { \partial \mathcal K_{(h)}(m, (0,L))}{\partial m}[n] = \lim_{h\down 0}\int_\Omega \lina(\iota(m) + \sym \nabla_h \psi^h_m): \iota(n)\td x.
                                                                                                                                    \end{aligned}\end{equation}
\end{enumerate}
\end{lemma}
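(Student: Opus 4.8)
The plan is to build the sequences $(\psi^h_m)$ by a diagonal near‑minimizer construction and then to read off properties (a)--(e) by combining the compactness/decomposition results of~\cite{MV16} with the abstract optimality arguments of~\cite{BVP17}. Fix a countable dense set $D\subset L^2(\Omega,\R^3)$ and a countable basis $\mathcal O$ of the open subsets of $(0,L)$ consisting of finite unions of intervals with rational endpoints. For each $m\in D$ choose, for every $h$, a sequence realizing $\mathcal K^+_{(h)}(m,(0,L))$ --- which, since $\mathcal K^-_{(h)}=\mathcal K^+_{(h)}$, then realizes the \emph{limit} $\mathcal K_{(h)}(m,(0,L))$ --- and diagonalize so that a single sequence realizes $\mathcal K_{(h)}(m,O)$ simultaneously for all $O\in\mathcal O$; since $O\mapsto\int_{O\times\omega}Q^h(x,\iota(m)+\nabla_h\psi^h)\,\td x$ is the restriction of a finite nonnegative measure, inner/outer approximation extends this to all open $O\subset(0,L)$ (this is the construction behind~\cite[lemma~2.6]{MV16}). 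The Lipschitz dependence of $\mathcal K^\pm_{(h)}$ on $m$ (\cite[lemma~2.5]{MV16}) lets one pass from $D$ to all of $L^2$ by continuity, giving~\eqref{eq:propofpsim}; property~(a) holds by construction, the admissible class being exactly the set of competitors in $\mathcal K^\pm_{(h)}$.

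\emph{Properties (b) and (d).} Apply the truncation and decomposition of~\cite[proof of theorem~2.15]{MV16} to $(\psi^h_m)$: modifying it on sets of vanishing measure --- which does not change the limiting energy, by the quadratic growth of $Q^h$ --- one may assume $(\abs{\sym\nabla\psi^h_m}^2)$ equi-integrable and obtains the splitting $\sym\nabla_h\psi^h_m=\sym\iota((B^h_m)'\pro)+\sym\nabla_h\vartheta^h_m$ with $B^h_m\to0$, $\vartheta^h_m\to0$ strongly in $L^2$. Testing with the null sequence gives $\mathcal K_{(h)}(m,O)\le\beta\norm{m}_{L^2(O\times\omega)}^2$, so the coercivity $\alpha\abs{\sym F}^2\le Q^h(x,F)$ bounds $\norm{\sym\nabla_h\psi^h_m}_{L^2(O\times\omega)}$; a rod Korn--Poincar\'e inequality applied inside the decomposition produces the quantitative estimate for $\norm{B^h_m}_{W^{1,2}(O)}+\norm{\nabla_h\vartheta^h_m}_{L^2(O\times\omega)}$ and, after a further truncation, the equi-integrability of $(\abs{(B^h_m)'}^2)$ and $(\abs{\nabla_h\vartheta^h_m}^2)$. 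For~(d): if $\widehat\psi^h$ also realizes~\eqref{eq:propofpsim} and satisfies~(a), then $\tfrac12(\psi^h_m+\widehat\psi^h)$ is admissible, and the parallelogram identity for the forms $Q^h$ forces $\int_\Omega Q^h(x,\nabla_h\psi^h_m-\nabla_h\widehat\psi^h)\,\td x\to0$; coercivity yields $\norm{\sym\nabla_h\psi^h_m-\sym\nabla_h\widehat\psi^h}_{L^2(\Omega)}\to0$, and $(\abs{\sym\nabla_h\widehat\psi^h}^2)$ inherits equi-integrability from~(b).

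\emph{Properties (c) and (e).} For~(c), let $\widehat\psi^h$ satisfy~(a) with $(\sym\nabla_h\widehat\psi^h)$ bounded in $L^2$. Since~(a) is stable under $\psi^h_m\mapsto\psi^h_m+t\widehat\psi^h$, minimality together with~\eqref{eq:propofpsim} and~\eqref{eq:defabstrlimit}, after expanding the quadratic form and passing to a subsequence along which $c_h:=\int_\Omega\lina(\iota(m)+\nabla_h\psi^h_m):\sym\nabla_h\widehat\psi^h\,\td x$ converges to some $c$, gives
\[
 tc+t^2 b\ge0\qquad\text{for all }t\in\R,
\]
with $b:=\liminf_{h\down0}\int_\Omega Q^h(x,\nabla_h\widehat\psi^h)\,\td x\ge0$; hence $c=0$, and as every subsequential limit of $c_h$ vanishes, $c_h\to0$, which is~(c). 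For~(e): applying~(c) with $\widehat\psi^h=\psi^h_m$ (legitimate by~(b)) gives $\mathcal K_{(h)}(m,(0,L))=\tfrac12\lim_{h\down0}\int_\Omega\lina(\iota(m)+\nabla_h\psi^h_m):\iota(m)\,\td x$. Set $\mathcal B(m,n):=\lim_{h\down0}\int_\Omega\lina(\iota(m)+\nabla_h\psi^h_m):\iota(n)\,\td x$; using~(c) once with $\widehat\psi^h=\psi^h_n$ and once with $m,n$ interchanged, together with the symmetry of $\lina$, one gets $\mathcal B(m,n)=\lim_{h\down0}\int_\Omega\lina(\iota(m)+\nabla_h\psi^h_m):(\iota(n)+\nabla_h\psi^h_n)\,\td x=\mathcal B(n,m)$, while the bound from~(b) yields $\abs{\mathcal B(m,n)}\le C\norm{m}_{L^2}\norm{n}_{L^2}$. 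Thus $\mathcal B$ is a bounded symmetric bilinear form; since the integrands are uniformly Lipschitz in $(m,n)$ the limit is subsequence-independent, so $\mathcal K_{(h)}(\cdot,(0,L))=\tfrac12\mathcal B(\cdot,\cdot)$ is a continuous quadratic form, hence (infinitely) Fr\'echet differentiable with $\frac{\partial\mathcal K_{(h)}(m,(0,L))}{\partial m}[n]=\mathcal B(m,n)$, which is exactly~\eqref{eq:mderivativeK} (note $\lina F=\lina\sym F$, so $\nabla_h\psi^h_m$ there may be replaced by $\sym\nabla_h\psi^h_m$).

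\emph{Main obstacle.} The technical core is~(b): arranging that the skew part $B^h_m$, the remainder $\vartheta^h_m$, the equi-integrability of all of $\abs{\sym\nabla\psi^h_m}^2$, $\abs{(B^h_m)'}^2$, $\abs{\nabla_h\vartheta^h_m}^2$, and the estimate uniform in the subdomain $O$ are produced simultaneously, by interlocking a Fonseca--M\"uller--Pedregal type truncation with the rod-specific decomposition and Korn--Poincar\'e inequalities --- this is the part imported from~\cite{MV16}. A secondary subtlety, handled above by subsequence extraction and uniform Lipschitz bounds, is that $\lina$ oscillates with $h$, so neither the limits defining $\mathcal B$ nor the identification in~(e) can be obtained from weak $L^2$-convergence alone.
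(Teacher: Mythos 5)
The paper does not prove this lemma itself; it explicitly imports it, stating ``which were proved in [BVP17] and [MV16].'' Your reconstruction is consistent in strategy with those sources: existence of $\psi^h_m$ by a diagonal near‑minimizer construction plus continuity of $\mathcal K^\pm_{(h)}$ in $m$; property (b) via the Griso/Marohni\'c--Vel\v{c}i\'c rod decomposition (Prop.~\ref{prop:grisoa}) together with Fonseca--M\"uller--Pedregal--type truncation; (d) via the parallelogram identity and coercivity; (c) via first‑order optimality; and (e) via polarization of the quadratic form $\mathcal K_{(h)}(\cdot,(0,L))$.

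Two places where your sketch is thin but not wrong, and one place worth tightening. First, in~(c) the passage from ``$\liminf_h[t c_h + t^2 d_h]\ge 0$ for every $t$'' to ``$c_h\to 0$'' requires the remark you made implicitly: along \emph{any} subsequence where $c_h$ converges, one obtains $tc + t^2 b\ge 0$ with $b\ge 0$ bounded, hence $c=0$, so all subsequential limits agree. Second, for~(e) it is not enough to observe that the integrand is Lipschitz in $(m,n)$: the existence of $\lim_h\int_\Omega\lina(\iota(m)+\nabla_h\psi^h_m):\iota(n)\,\td x$ must be established, e.g.\ via polarization ($\mathcal B(m,n)=\mathcal K_{(h)}(m+n)-\mathcal K_{(h)}(m)-\mathcal K_{(h)}(n)$, which is a well‑defined limit once one checks that $\psi^h_m+\psi^h_n$ realizes $\mathcal K_{(h)}(m+n)$; this in turn follows by applying~(c) to $\widehat\psi^h=\psi^h_{m+n}-\psi^h_m-\psi^h_n$, which gives $\sym\nabla_h(\psi^h_{m+n}-\psi^h_m-\psi^h_n)\to 0$ in $L^2$ by coercivity). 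Third, the quantitative bound in~(b) uniformly in $O$ is nontrivial: the decomposition constants of Prop.~\ref{prop:grisoa} are stated for the full domain, and one must verify (as is done in~\cite{MV16}) that they localize; your phrase ``a rod Korn--Poincar\'e inequality applied inside the decomposition'' is where that work is buried, and you should make explicit that the coercivity bound $\alpha\|\sym\nabla_h\psi^h_m\|_{L^2(O\times\omega)}^2\le C(\mathcal K_{(h)}(m,O)+\beta\|m\|^2_{L^2(O\times\omega)})\le C\beta\|m\|^2_{L^2(O\times\omega)}$ feeds into the decomposition estimate. With these points made precise, the argument is a faithful reconstruction of the cited proofs.
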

The sequence $(\psi^h_m)$ is called the {\itshape relaxation sequence} for $m$. For our purposes $m$ will always be of the form $m = B \pro + b e_1$ for some $B \in L^2((0,L), \mr^{3\times 3}_{\skewsym})$ and $b \in L^2((0,L))$. Thus we introduce the linear map
\begin{equation}\begin{aligned}\label{eq:mdefinition}
m:L^2((0,L), \mr^{3\times 3}_{\skewsym}) \times L^2((0,L),\R) \to L^2(\Omega, \R^3), \quad m(B,b) :=  B \pro + b e_1.
\end{aligned}\end{equation} 

By applying the chain rule together with~\ref{enum:EL} we thus can deduce the derivative of $\mathcal K_{(h)}(\cdot, (0,L)) \circ m$. 
Indeed, for every $B,M \in W^{1,2}((0,L),\R^{3\times3}_{\skewsym})$, $b,\mu\in L^2((0,L))$  we have 
\begin{equation}\begin{aligned}\label{eq:limitELnonopt}
\frac { \partial \mathcal K_{(h)}(m(B,b), (0,L))}{\partial B}[M]&= \lim_{h\down 0}\int_\Omega \lina(\iota(m(B,b)) + \sym \nabla_h \psi^h_{m(B,b)}): \iota(M\pro)\td x,\\
                                                                                                                                                       \frac { \partial \mathcal K_{(h)}(m(B,b), (0,L))}{\partial b}[\mu]&= \lim_{h\down 0}\int_\Omega \lina(\iota(m(B,b)) + \sym \nabla_h \psi^h_{m(B,b)}): \iota(\mu e_1)\td x.
\end{aligned}\end{equation}
To shorten notation we define $\mathcal K(m) := \mathcal K(m, (0,L))$ for every $m\in L^2$.

In~\cite[proposition 2.12]{MV16} also the following result regarding the existence of a density for $\mathcal K_{(h)}$ was proved:
\begin{prop}\label{prop:density}
Let $(h) \subset (0, \infty)$ with $h \down 0$ be a sequence such that~\eqref{eq:defabstrlimit} holds true for every $m \in L^2(\Omega,\R^3)$. Then a measurable function $Q^0: [0, L] \times \R^{3\times3}_{\skewsym} \times \R \to [0, \infty)$ exists, such that for every $O \subset [0,L]$ open, and every $(B,b) \in L^2((0,L), \R^{3\times3}_{\skewsym} \times \R)$ we have
\begin{equation*}\begin{aligned}
  \mathcal K_{(h)}(m(B,b),O) = \int_O Q^0(x_1, B(x_1), b(x_1)) \td x_1.
\end{aligned}\end{equation*}
Furthermore for almost every $x_1 \in [0,L]$ the map $Q^0(x_1, \cdot, \cdot)$ is a quadratic form,
and there exists $C = C(\omega) > 0$ independent of $x_1$, such that for every $\widehat B \in \R^{3\times 3}_{\skewsym},\widehat  b\in \R$ we have
\[
    C^{-1} ( \abs{ \widehat B}^2 + \abs{ \widehat  b }^2) \leq Q^0(x_1, \widehat B,\widehat b) \leq C \beta ( \abs{ \widehat B}^2 + \abs{ \widehat  b }^2).
\]
\end{prop}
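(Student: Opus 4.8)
The statement asserts that the abstract limit functional $\mathcal K_{(h)}(m(B,b),O)$ is a local (integral) functional in the pair $(B,b)$, given by integrating a quadratic Carathéodory density $Q^0$ over $O$, with two-sided bounds controlled by $\abs{B}^2 + \abs{b}^2$. The overall strategy is the classical one for recognizing a local functional: show that $O \mapsto \mathcal K_{(h)}(m(B,b),O)$ is the restriction of a Borel measure, absolutely continuous with respect to Lebesgue measure on $[0,L]$, with a density that depends on $(B,b)$ only through their pointwise values, and then invoke a measurable-selection / blow-up argument to exhibit the density $Q^0(x_1,\cdot,\cdot)$ as a quadratic form.

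First I would establish the \emph{set-function properties}. Using the definition of $\mathcal K^{\pm}_{(h)}$ through infima of $\liminf$/$\limsup$ of $\int_{O\times\omega} Q^h(x,\iota(m)+\nabla_h\psi^h)\,dx$, together with~\eqref{eq:defabstrlimit} (existence of the limit on a dense set and the continuity Lemma cited from~\cite[lemma~2.5]{MV16}), one checks that for fixed $(B,b)$ the map $O \mapsto \mathcal K_{(h)}(m(B,b),O)$ is monotone, superadditive on disjoint open sets, and (by a standard De~Giorgi–Letta argument using localized competitors and a cutoff to glue corrector sequences on adjacent pieces — here the gluing is where properties (a) and (b) of Lemma~\ref{lem:orthabstract}, i.e.\ the equi-integrability of $\abs{\sym\nabla\psi^h_m}^2$ and the decomposition into a skew part plus a small remainder, are used to control the interface error) also subadditive, hence the restriction of a Radon measure $\mu_{(B,b)}$ on $(0,L)$. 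The two-sided bound $\alpha\abs{\sym F}^2 \le Q^h(x,F)\le\beta\abs{\sym F}^2$ from~\eqref{eq:Qproperties}, applied to $F = \iota(m) + \nabla_h\psi^h$ with the optimal relaxation sequence and the estimates in Lemma~\ref{lem:orthabstract}(b), gives $C^{-1}(\norm{B}^2_{L^2(O\times\omega)}+\norm{b}^2_{L^2(O)}) \le \mu_{(B,b)}(O) \le C\beta(\ldots)$, so $\mu_{(B,b)} \ll \mathcal L^1$ and its density $Q^0_{(B,b)}(x_1)$ obeys the claimed bounds with $\abs{B(x_1)}^2+\abs{b(x_1)}^2$ replacing the norms.

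Next comes the \emph{localization of the density}: I must show $Q^0_{(B,b)}(x_1)$ depends only on $(B(x_1),b(x_1))$. This follows because if $(B_1,b_1) = (B_2,b_2)$ on a set $E$ of positive measure, then on small intervals $I\subset E$ the two relaxation sequences are admissible competitors for each other (they satisfy the same constraints (a)), and the quadratic-in-$F$, Lipschitz estimate in~\eqref{eq:Qproperties} together with $\nabla_h$ localized on $I\times\omega$ forces $\abs{\mu_{(B_1,b_1)}(I) - \mu_{(B_2,b_2)}(I)} = o(\abs I)$; Lebesgue differentiation then yields equality of densities a.e.\ on $E$. Having localized, define $Q^0(x_1,\widehat B,\widehat b) := Q^0_{(\widehat B \text{ const},\widehat b \text{ const})}(x_1)$ — or more carefully, the blow-up limit $\lim_{r\to0}\frac{1}{2r}\mu_{(B,b)}(x_1-r,x_1+r)$ for any $(B,b)$ attaining the given values at $x_1$; measurability in $x_1$ comes from writing $Q^0$ as a pointwise limit of the Borel-measurable quotients. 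That $Q^0(x_1,\cdot,\cdot)$ is a \emph{quadratic form} follows from bilinearity of $\lina$ and the limiting Euler–Lagrange identity~\eqref{eq:mderivativeK} (respectively~\eqref{eq:limitELnonopt}): one shows $\mathcal K_{(h)}(m(B,b)) = \frac12 \frac{\partial \mathcal K_{(h)}}{\partial(B,b)}[(B,b)]$ by homogeneity of degree two (test the derivative in the direction $(B,b)$ itself and use that $\psi^h_{tm}$ can be taken as $t\psi^h_m$), and the right side is manifestly bilinear, hence $Q^0(x_1,\cdot,\cdot)$ is a quadratic form and the polarization identity transfers to the densities.

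The \textbf{main obstacle} I anticipate is the subadditivity step in the De~Giorgi–Letta argument — gluing near-optimal relaxation sequences on overlapping open sets $O_1, O_2$ without losing mass at the interface. Naïvely patching $\psi^h$ via a cutoff $\chi$ produces error terms involving $\frac1h \chi'\,\psi^h$ in $\nabla_h$, which is dangerous because $\psi^h$ itself blows up in the $e_2,e_3$ components like $1/h$ (see~\eqref{eq:definitioncorrector}). The resolution, exactly as engineered in Lemma~\ref{lem:orthabstract}(b), is to only patch the \emph{good parts}: use the decomposition $\sym\nabla_h\psi^h_m = \sym\iota((B^h_m)'\pro) + \sym\nabla_h\vartheta^h_m$ with $B^h_m, \vartheta^h_m \to 0$ strongly in $L^2$ and $\abs{(B^h_m)'}^2, \abs{\nabla_h\vartheta^h_m}^2$ equi-integrable, so that the cutoff acts on quantities that are either skew (and handled by a one-dimensional antiderivative correction as in the passage from $\Psi^h$ to $\widehat\Psi^h$ in~\eqref{eq:definitioncorrector}) or genuinely small; equi-integrability then kills the interface contribution in the limit. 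Everything else is bookkeeping with the bounds~\eqref{eq:Qproperties}.
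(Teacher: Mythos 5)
The paper does not prove this proposition itself: it is cited from~\cite[Proposition~2.12]{MV16}, so there is no internal proof to compare against. Your outline follows what is essentially the standard route used there — localization (De~Giorgi--Letta) to recognize $O\mapsto\mathcal K_{(h)}(m(B,b),O)$ as the trace of a Radon measure, absolute continuity and a local density $Q^0(x_1,\cdot,\cdot)$, and $2$-homogeneity plus the derivative identity~\eqref{eq:mderivativeK} to recognize a quadratic form — and you correctly identify the gluing step as the main obstacle, to be resolved via the decomposition and equi-integrability in Lemma~\ref{lem:orthabstract}(b). So the overall plan is sound and compatible with the cited proof.

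One step, however, is misattributed and, as written, ungrounded: the coercivity lower bound. You claim that $\alpha\abs{\sym F}^2\leq Q^h(x,F)$ ``applied to $F=\iota(m)+\nabla_h\psi^h$ \ldots\ and the estimates in Lemma~\ref{lem:orthabstract}(b)'' yields $C^{-1}\bigl(\norm{B}_{L^2}^2+\norm{b}_{L^2}^2\bigr)\leq\mathcal K_{(h)}(m(B,b),O)$. But Lemma~\ref{lem:orthabstract}(b) provides only an \emph{upper} bound on the relaxation sequence in terms of $\norm m$; it cannot rule out the infimum over admissible $\psi^h$ driving the energy to zero. The real content of the lower bound is that the admissibility constraints in the definition of $\mathcal K^{\pm}_{(h)}$ — namely $(\psi^h_1,h\psi^h_2,h\psi^h_3)\to 0$ and $\twist(\psi^h_2,\psi^h_3)\to 0$ in $L^2$ — prevent $\sym\nabla_h\psi^h$ from cancelling the moments of $\iota(m(B,b))$. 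For example, cancelling the $b\,e_1\otimes e_1$ contribution would force $\partial_1\psi^h_1\to -b$, incompatible with $\psi^h_1\to 0$ in $L^2$ given $\psi^h_1(0,\cdot)$ independent of $x_1$; the twist constraint plays the analogous role for the $B_{23}$-moment, and the centering~\eqref{eq:omegacentered} separates the $x_2$- and $x_3$-moments from the mean. This is a separate, nontrivial argument (it is a dedicated lemma in~\cite{MV16}, not a consequence of Lemma~\ref{lem:orthabstract}(b)), and it must be supplied before the two-sided bound on $Q^0$ can be asserted.
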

From this we easily deduce that the map $\widehat b_{\min}: [0,L] \times \R^{3\times3}_{\skewsym} \to \R$, given by $\widehat b_{\min}(x_1, \widehat B) = \argmin_{b \in \R} Q^0(x_1,\widehat  B, b)$ is well-defined, linear in $\widehat B$, and there exists a constant $C' = C'(\alpha,\beta, \omega) > 0$ such that for almost every $x_1$ and all $\widehat B \in \R^{3\times 3}_{\skewsym}$ we have
\[
  \abs { \widehat b_{\min}(x_1, \widehat B) } \leq C'\abs{\widehat  B}.
\]
Finally we can define the density for the limiting bending energy. Let map $Q^0_1: [0,L] \times \R^{3\times3}_{\skewsym} \to \R$ be given by $Q_1^0(x_1, \widehat B) := Q^0(x_1, \widehat B,\widehat b_{\min}(x_1, \widehat B))$. It is easily seen that:

For almost every $x_1 \in [0,L]$ the map $Q_1^0(x_1, \cdot)$ is a quadratic form, and there exists $C'' = C''(\alpha,\beta,\omega) > 0$ such that for all $\widehat B \in \R^{3\times 3}_{\skewsym}$ it holds
\[
  (C'')^{-1} \abs {\widehat B}^2 \leq Q_1^0(x_1, \widehat B) \leq C'' \abs { \widehat B }^2.
\]
We now define the limiting bending energy $\mathcal K^0_{(h)}: L^2((0,L),\R^{3\times3}_{\skewsym}) \to \R$ simply by integrating over the density $Q^0_1$, i.e., 
\begin{equation*}\begin{aligned}
     \mathcal K^0_{(h)}(B) &:= \int_0^L Q^0_1\paren[\Big]{x_1, B(x_1)}\td x_1 \\
                                    &\phantom{:}= \int_0^L Q^0\paren[\Big]{x_1, B(x_1),\widehat  b_{\min}\paren[\Big]{x_1, B(x_1)}  }\td x_1.
\end{aligned}\end{equation*}
From the linearity of $\widehat B\mapsto\widehat  b_{\min}(\cdot,\widehat  B)$ and the Fr\'echet-differentiability of $\mathcal K_{(h)}$ we deduce that also $\mathcal K^0_{(h)}$ is Fr\'echet-differentiable. 
For fixed $\widehat B \in \R^{3\times 3}_{\skewsym}$ and almost every $x_1$ the function $Q^0(x_1, \widehat B, \cdot)$ has quadratic growth, thus $\widehat b_{\min}(x_1, \widehat B)$ is the unique stationary point of $Q^0(x_1, \widehat B, \cdot)$, i.e.,
\begin{equation*}\begin{aligned}
(\partial_b Q^0)(x_1, \widehat B, b) = 0 \iff b = \widehat b_{\min}(x_1, \widehat B).
\end{aligned}\end{equation*}

Furthermore the mapping $ b_{\min}:  L^2((0,L),\R^{3\times3}_{\skewsym}) \to L^2((0,L))$, $ b_{\min}( B) =\widehat b_{\min}(\cdot, B)$ is linear and well-defined. Thus for any $B \in L^2((0,L), \R^{3\times 3}_{\skewsym})$ and $b \in L^2((0,L))$ we have
\begin{equation}\begin{aligned}\label{eq:widetildebdef}
  \set*{\frac { \partial \mathcal K_{(h)}(m(B,b))}{\partial b}[\mu] = 0 \quad \text{ for all } \mu \in L^2((0,L)) } \iff b =  b_{\min}(B).
\end{aligned}\end{equation}
We are now able to compute the variations of $\mathcal K^0_{(h)}$. For fixed $B, M \in L^2((0,L), \R^{3\times3}_{\skewsym})$ we calculate by using the chain rule
\begin{equation}\begin{aligned}\label{eq:Kbarderivativestepone}
 \paren*{\frac{ \partial }{\partial B}\mathcal K^0_{(h)}}(B) [M]
&=
\frac{ \partial }{\partial B} \paren*{ { \mathcal K}_{(h)}\paren[\Big]{m\paren[\big]{B, b_{\min}(B)}} }[M]\\
&=
 \paren*{ \frac{{ \partial\mathcal K}_{(h)}}{\partial m }\paren[\Big]{m\paren[\big]{B, b_{\min}(B)}} }\brackets*{\frac { \partial m(B, b_{\min}(B))}{\partial B}[M]}.
\end{aligned}\end{equation}
From~\eqref{eq:mdefinition} and the linearity of $b_{\min}$ we obtain
\begin{equation*}\begin{aligned}
\frac {\partial m(B, b_{\min}(B))}{\partial B}[M] = M \pro + ((\partial_B  b_{\min})(0): M)e_1
\end{aligned}\end{equation*}
and thus~\eqref{eq:Kbarderivativestepone} can be rewritten to
\begin{equation}\begin{aligned}\label{eq:Kbareq}
\paren*{\frac{ \partial }{\partial B} \mathcal K^0_{(h)}}(B) [M]&=\lim_{h\down 0}\int_\Omega \lina\paren*{\iota(m(B, b_{\min}(B))) + \sym \nabla_h \psi^h_{m(B, b_{\min}(B))}} \\
&\hspace{5cm}: \iota ( M \pro + ((\partial_B  b_{\min})(0): M)e_1).
\end{aligned}\end{equation}
The function $b_{\min}(B)$ satisfies according to~\eqref{eq:widetildebdef} the equation
\begin{equation}\begin{aligned}\label{eq:Kbarbderivative}
  0&=\frac { \partial \mathcal K_{(h)}}{\partial b}(m(B, b_{\min}(B))[\mu]\\
&=\lim_{h\down 0}\int_\Omega \lina\paren*{\iota(m(B, b_{\min}(B))) + \sym \nabla_h \psi^h_{m(B, b_{\min}(B))}} : \iota (  \mu e_1)
\end{aligned}\end{equation}
for all $\mu \in L^2 ((0,L))$. Finally using $\mu := (\partial_B  b_{\min})(0): M$ in~\eqref{eq:Kbarbderivative} allows us to simplify~\eqref{eq:Kbareq} to
\begin{equation}\begin{aligned}\label{eq:BderivativeofK0}
\paren*{\frac{ \partial }{\partial B}  \mathcal K^0_{(h)}}(B) [M]&=\lim_{h\down 0}\int_\Omega \lina\paren*{\iota(m(B, b_{\min}(B))) + \sym \nabla_h \psi^h_{m(B, b_{\min}(B))}} : \iota ( M \pro ).
\end{aligned}\end{equation}

\subsection{Derivation of the limit Euler-Lagrange equation}
Let $(y, d_2, d_3) \in \mathcal A$, where $\admissible$ is give in~\eqref{eq:admissibleset}, and assume, in addition, that $y(0) =0$. The associated rotation function is then given by $R = (y', d_2, d_3) \in W^{1,2}((0,L),\SO(3))$. Recall that the limit energy of $(y,d_2,d_3)$ is
\begin{equation*}\begin{aligned}
  \energyfunction^0( y, d_2, d_3 ) &  = \mathcal K^0_{(h)}( R^T R') - \int_0^L g \cdot y \,\td x_1 \\
                                 &  = \mathcal K^0_{(h)}( R^T R') - \int_0^L \widehat g \cdot y' \td x_1,
\end{aligned}\end{equation*}
with $\widehat g(x_1) = \int_{x_1}^L g(s) \td s$. We say $(y, d_2, d_3)$ is a \textit{stationary point} of $\energyfunction^0$, if for any $C^1$-curve $\gamma: (-\infty, \infty) \to \mathcal A$ with $\gamma(0) = (y,d_2, d_3)$ we have
\[
  \restrict{ \paren*{\partial_{\eps} \energyfunction^0 [ \gamma(\eps) ]} }{\eps = 0} = D \energyfunction^0[y, d_2, d_3][ \dot \gamma(0)] = 0,
\]
where $\dot \gamma$ denotes the derivative of $\gamma$. The following lemma gives an alternative characterization by identifying the tangent spaces of $\mathcal A$ and explicitly computing the derivative $D\energyfunction^0$.
\begin{lemma}\label{lem:stationarylimit}
Let $(y, d_2, d_3) \in \mathcal A$. Define $R = (y', d_2, d_3)$ and $A = R^T R'$. Then $(y,d_2,d_3)$ is a stationary point of $\energyfunction^0$ iff.\ for every $\Phi \in W^{1,2}((0,L), \R^{3\times3}_{\skewsym})$ we have
\begin{equation}\begin{aligned}\label{eq:limitEL}
 \paren*{ \frac { \partial} { \partial B }\mathcal K^0_{(h)}}(A) [A\Phi -\Phi A + \Phi'] = \int_0^L \widehat g \cdot (R\Phi e_1) \td x_1.
\end{aligned}\end{equation}
\end{lemma}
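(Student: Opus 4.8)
The plan is to characterize the tangent space of $\admissible$ at $(y,d_2,d_3)$ and then apply the chain rule to differentiate $\energyfunction^0$ along curves, reducing everything to the already-computed derivative formula~\eqref{eq:BderivativeofK0}. First I would identify the tangent vectors: given a $C^1$-curve $\gamma(\eps) = (y_\eps, d_{2,\eps}, d_{3,\eps})$ in $\admissible$ with $\gamma(0) = (y,d_2,d_3)$, the associated rotations $R_\eps = (y_\eps', d_{2,\eps}, d_{3,\eps})$ form a $C^1$-curve in $W^{1,2}((0,L),\SO(3))$. Since $\SO(3)$ is a manifold with tangent space at $R$ given by $R \cdot \R^{3\times3}_{\skewsym}$, we can write $\partial_\eps R_\eps|_{\eps=0} = R\Phi$ for some $\Phi \in W^{1,2}((0,L),\R^{3\times3}_{\skewsym})$ (the regularity $W^{1,2}$ following because both $R$ and $\partial_\eps R_\eps|_{\eps=0}$ lie in $W^{1,2}$). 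Conversely every such $\Phi$ is realized by some admissible curve, e.g. $R_\eps := R \exp(\eps \Phi)$, with $y_\eps(x_1) := \int_0^{x_1} R_\eps(s) e_1 \,\td s$ and $d_{k,\eps} := R_\eps e_k$; one checks $\gamma(\eps) \in \admissible$, $\gamma(0) = (y,d_2,d_3)$, and $\dot\gamma(0)$ corresponds to $\Phi$. Thus stationarity is equivalent to $D\energyfunction^0[y,d_2,d_3][\dot\gamma(0)] = 0$ ranging over all $\Phi \in W^{1,2}((0,L),\R^{3\times3}_{\skewsym})$.

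Next I would compute the two contributions to $\partial_\eps \energyfunction^0[\gamma(\eps)]|_{\eps=0}$ separately. For the force term, since $y_\eps(x_1) = \int_0^{x_1} R_\eps e_1$, we have $y_\eps' = R_\eps e_1$, so $\partial_\eps y_\eps'|_{\eps=0} = R\Phi e_1$, and differentiating $-\int_0^L \widehat g \cdot y_\eps' \,\td x_1$ gives $-\int_0^L \widehat g \cdot (R\Phi e_1)\,\td x_1$. For the elastic term, set $A_\eps := R_\eps^T R_\eps'$; then $\energyfunction^0$'s elastic part is $\mathcal K^0_{(h)}(A_\eps)$. By the Fréchet-differentiability of $\mathcal K^0_{(h)}$ (established after Proposition~\ref{prop:density}), $\partial_\eps \mathcal K^0_{(h)}(A_\eps)|_{\eps=0} = (\partial_B \mathcal K^0_{(h)})(A)[\partial_\eps A_\eps|_{\eps=0}]$. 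The key computation is then $\partial_\eps A_\eps|_{\eps=0}$: differentiating $A_\eps = R_\eps^T R_\eps'$ using $\partial_\eps R_\eps|_{\eps=0} = R\Phi$ (hence $\partial_\eps R_\eps'|_{\eps=0} = R'\Phi + R\Phi'$) and $\partial_\eps R_\eps^T|_{\eps=0} = \Phi^T R^T = -\Phi R^T$ gives
\begin{equation*}
\partial_\eps A_\eps|_{\eps=0} = -\Phi R^T R' + R^T(R'\Phi + R\Phi') = -\Phi A + A\Phi + \Phi' = A\Phi - \Phi A + \Phi',
\end{equation*}
which is exactly the argument appearing in~\eqref{eq:limitEL}. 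Combining, $D\energyfunction^0[y,d_2,d_3][\dot\gamma(0)] = (\partial_B \mathcal K^0_{(h)})(A)[A\Phi - \Phi A + \Phi'] - \int_0^L \widehat g\cdot(R\Phi e_1)\,\td x_1$, and setting this to zero for all $\Phi$ yields~\eqref{eq:limitEL}.

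The main obstacle, and the part requiring care, is the tangent-space identification: one must verify that every $C^1$-curve in $\admissible$ (in the strong $W^{2,2}\times W^{1,2}\times W^{1,2}$ topology) produces a derivative of the form $R\Phi$ with $\Phi \in W^{1,2}$ skew-symmetric, and conversely that the explicit curve $R_\eps = R\exp(\eps\Phi)$ genuinely lands in $\admissible$ with the right regularity — in particular that $y_\eps \in W^{2,2}$, which follows since $y_\eps' = R_\eps e_1 \in W^{1,2}$, and that $\eps \mapsto (y_\eps, d_{2,\eps}, d_{3,\eps})$ is $C^1$ into the product space, which reduces to smoothness of $\eps\mapsto \exp(\eps\Phi)$ as a map into $W^{1,2}((0,L),\R^{3\times3})$ together with the boundedness of pointwise multiplication $W^{1,2}\cdot W^{1,2} \hookrightarrow W^{1,2}$ in one dimension. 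Differentiating $R_\eps^T R_\eps = \id$ at $\eps = 0$ shows $\Phi = R^T\partial_\eps R_\eps|_{\eps=0}$ is automatically skew, and differentiating once more confirms the $\SO(3)$ constraint is preserved to first order; all remaining manipulations are routine product-rule calculations in one spatial dimension. Everything else is a direct application of the chain rule combined with formula~\eqref{eq:BderivativeofK0}, which already packaged the more delicate $b_{\min}$-elimination.
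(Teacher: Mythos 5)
Your proposal is correct and follows essentially the same approach as the paper: identify the tangent space of $\admissible$ at $(y,d_2,d_3)$ via $\SO(3)$ so that tangent directions are parametrized by skew-symmetric $\Phi \in W^{1,2}$, apply the chain rule to the two terms of $\energyfunction^0$, and compute $\partial_\eps A_\eps|_{\eps=0} = A\Phi - \Phi A + \Phi'$; the algebra is identical. You are somewhat more explicit than the paper about the converse half of the ``iff'' — the paper simply exhibits the tangent vector $\widehat\Phi$ and concludes, whereas you actually construct the curve $R_\eps = R\exp(\eps\Phi)$ (with the associated $y_\eps$, $d_{k,\eps}$) realizing an arbitrary skew $\Phi$ and note the one-dimensional Sobolev algebra property needed to get $C^1$-dependence — but this is a matter of completeness in exposition, not a different route.
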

\begin{proof}
Let $(y^\eps, d_2^\eps, d_3^\eps)_\eps \subset \admissible$ be a $C^1$-curve with $(y^0, d_2^0, d_3^0) = (y, d_2,d_3)$, and define $(R_\eps)_{\eps} \subset W^{1,2}((0,L), \SO(3))$ by $R_\eps = ((y^\eps)', d_2^\eps, d_3^\eps)$; especially we have $R_0 = R$.  It is well-known that the tangent space of $\SO(3)$ in $R$ is given by $\set{ R\Phi: \Phi \in \R^{3\times3}_{\skewsym}}$. Thus, denoting the derivative of $(R_\eps)$ with respect to $\eps$ by $(\dot{R}_\eps)$, we obtain $R_0^T\dot{R}_0 = \Phi$ for some $\Phi \in W^{1,2}((0,L),\mr^{3\times3}_{\skewsym})$ and the tangent space of $\admissible$ in $(y,d_2,d_3)$ is given by
\begin{equation*}\begin{aligned}
&T_{(y,d_2,d_3)} \admissible = \set[\Big]{(v_1, v_2, v_3) \in W^{2,2}((0,L),\R^3)\times W^{1,2}((0,L),\R^3)^2: \\
&\hspace{5cm}  (y', d_2, d_3)^T(v_1', v_2, v_3) \in W^{1,2}((0,L), \R^{3\times3}_{\skewsym})}.
\end{aligned}\end{equation*}
With the chain rule we obtain
\begin{equation*}\begin{aligned}
  \restrict{\partial_{\eps }\energyfunction^0[y^\eps, d^\eps_2, d^\eps_3]}{\eps=0} &= \restrict{\partial_{\eps} \paren*{\mathcal K^0_{(h)}( R^T_\eps R'_\eps) - \int_0^L \widehat g \cdot y'_\eps}}{\eps = 0}\\
                                                                &= \restrict{\frac{\partial \mathcal K^0_{(h)}}{\partial B}(R^TR')[\partial_\eps( R^T_\eps R'_\eps)]}{\eps=0} - \int_0^L \widehat g \cdot (R_0 \Phi e_1)\\
                                                                &=  \frac{\partial \mathcal K^0_{(h)}}{\partial B}(R^TR')[\dot{R}^T_0 R'_0 + R^T_0 \dot{R}'_0] - \int_0^L \widehat g \cdot (R_0 \Phi e_1).
\end{aligned}\end{equation*}
By using the relationship $R_0^T\dot{R}_0 = \Phi$ we obtain
\[
  \dot{R}^T_0 R'_0 = -\Phi R^T R' \quad \text{ and } \quad R^T_0 \dot{R}'_0 = R^T(R\Phi)' = R^T R' \Phi + R^T R \Phi',
\]
and thus
\[
  \dot{R}^T_0 R'_0 + R^T_0 \dot{R}'_0 = -\Phi R^TR' + R^T R' \Phi + \Phi'.
\]
Furthermore we can insert $A = R^T R'$ into the equality, which finally reads
\[
\dot{R}^T_0 R'_0 + R^T_0 \dot{R}'_0 = -\Phi A+ A \Phi + \Phi'.
\]
Defining 
\[
  x_1 \mapsto \widehat \Phi(x_1) = \paren*{\int_0^{x_1} R(s)\Phi(s)e_1 \td s,\; (R\Phi)(x_1)e_2,\; (R\Phi)(x_1)e_3} \in T_{(y, d_2, d_3)}\admissible
\]
we thus get
\begin{equation*}\begin{aligned}
 (D \energyfunction^0)(y, d_2, d_3)[\widehat \Phi]
=\frac{\partial \mathcal K^0_{(h)}}{\partial B}(A)[A\Phi - \Phi A + \Phi'] - \int_0^L \widehat g \cdot (R \Phi e_1).
\end{aligned}\end{equation*}
If $(y, d_2, d_3)$ is stationary, we left-hand side vanishes and we obtain as claimed
\begin{equation*}\begin{aligned}
\frac{\partial \mathcal K^0_{(h)}}{\partial B}(A)[A\Phi - \Phi A + \Phi'] = \int_0^L \widehat g \cdot (R \Phi e_1).
\end{aligned}\qedhere\end{equation*}
\end{proof}

\section{Proof of the main theorem}
We dedicate the whole section to the proof of Theorem~\ref{thm:main}. From now on let $W^h, y^h, g$ be as in Theorem~\ref{thm:main}.
From the energy bound~\eqref{eq:energyboundinthm} together with the non-degeneracy hypothesis~\ref{(M2)} on $W^h$ we obtain the inequality
\[
  \limsup_{h\down 0} \norm{ \dist(\nabla_h y^h, \SO(3))}_{L^2} < \infty,
\]
and furthermore by assumption on $(y^h)$ we have that $y^h(0, x_2, x_3) = hx_2 e_2 + h x_3 e_3$.  
Thus we may apply Proposition~\ref{thm:compactness} and deduce that there exists a sequence of rotations $(R^h) \subset C^\infty([0,L], \SO(3))$ with properties~\eqref{eq:estimatedifferenceutorot},~\eqref{eq:estimatederivativeR} and~\eqref{eq:Rbdry}.

 We recall the definition of the linearized strain $G^h$ given by
\[
  G^h = \frac { (R^h)^T \nabla_h y^h - \id_{3\times3}} h.
\]
It was already introduced in~\eqref{eq:Ghdefinitionu} and, by the discussion following the definition, there exist a subsequence (not relabeled) and a function $G \in L^2(\Omega, \R^{3\times3})$ such that $G^h \weakly G$ in $L^2$.
From the frame indifference of $W^h$ it follows that
\[
  DW^h(x,F) = R DW^h(x, R^T F)  \quad \text{ for all } F \in \mr^{3\times 3},\, R\in \SO(3), \text{ a.e. } x \in \Omega.
\]
Thus
\begin{equation}\begin{aligned}\label{eq:connectionDWE}
DW^h(x, \nabla_h y^h) = R^h DW^h(x, \id_{3\times 3} + h G^h) = h R^h E^h,
  \end{aligned}\end{equation}
where $E^h:= h^{-1} DW^h(\cdot, \id_{3\times3} + h G^h)$ is the nonlinear stress. On the other hand a Taylor expansion around the identity yields
\[
  DW^h(x, \id_{3\times 3} + hG^h) = h D^2W^h(x, \id_{3\times 3}) G^h + \zeta^h(x, hG^h).
\]
where (S3) implies the estimate
\[
 \abs{ \zeta^h(x, F)} \leq \widehat r(\abs F) \abs F,
\]
for some monotone $\widehat r: [0,\infty) \to [0,\infty)$ with $\widehat r(\eps) \down 0$ if $\eps \down 0$.
 Together with $D^2W^h(\cdot, \id_{3\times 3}) = D^2Q^h(\cdot, 0) = \lina$ we get
 \begin{equation}\begin{aligned}\label{eq:Ehdecomp}
 E^h = \lina\sym G^h + \frac 1 h  \zeta^h(\cdot, hG^h).
 \end{aligned}\end{equation}
The error term $h^{-1}\zeta^h(\cdot, hG^h)$ does not necessarily converge strongly to $0$ in $L^2$,
since $G^h$ might concentrate in $L^2$. We will now show that the error term does not oscillates, and that it weakly converges to zero:
\begin{lemma}\label{lem:weakvanishingofzeta}
Let $(\eta^h) \subset L^2(\Omega)$ be such that $(\abs {\eta^h}^2)$ is equi-integrable. Then
\[
\lim_{h\down 0}\int_\Omega \eta^h \cdot \paren*{\frac 1 h  \zeta^h(\cdot, hG^h)} \td x = 0.
\]
\end{lemma}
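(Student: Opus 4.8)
The plan is to split $\Omega$ into the region where $G^h$ is bounded and the region where it is large, controlling the latter via the equi‑integrability of $(|\eta^h|^2)$. First I would collect the ingredients. The bound $|\zeta^h(x,F)| \le \widehat r(|F|)|F|$ gives, pointwise a.e.\ in $\Omega$, that $\tfrac1h|\zeta^h(x,hG^h(x))| \le \widehat r(h|G^h(x)|)\,|G^h(x)|$. Moreover $\widehat r$ may be taken bounded, $\widehat r(t)\le\widehat M$ for all $t\ge0$ and some finite $\widehat M$: since $D^2W^h(x,\id_{3\times3})=\lina$ has operator norm at most $2\beta$, the linear stress growth \ref{(M4)} yields $|\zeta^h(x,F)| \le C_0(|F|+1)$ for all $F$, which combined with $\widehat r(1)<\infty$ and monotonicity of $\widehat r$ forces the claimed bound. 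Finally $(G^h)$ is bounded in $L^2$ by \eqref{eq:estimatedifferenceutorot}, and since $(|\eta^h|^2)$ is equi‑integrable on the finite‑measure set $\Omega$ the sequence $(\eta^h)$ is bounded in $L^2$ as well; let $C$ bound both $\|G^h\|_{L^2}$ and $\|\eta^h\|_{L^2}$.

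Next I would fix $\eps>0$ and use equi‑integrability to pick $\delta>0$ so that $\int_E|\eta^h|^2\,dx<\eps$ for all $h$ whenever $|E|<\delta$, and then choose $N>0$, independent of $h$, with $C^2/N^2<\delta$. By Chebyshev's inequality $|\{|G^h|>N\}| \le \|G^h\|_{L^2}^2/N^2 < \delta$, so $\int_{\{|G^h|>N\}}|\eta^h|^2\,dx < \eps$ for every $h$. On $\{|G^h|>N\}$ I bound $\tfrac1h|\zeta^h(\cdot,hG^h)| \le \widehat M|G^h|$ and apply Cauchy--Schwarz:
\[
\Bigl|\int_{\{|G^h|>N\}} \eta^h\cdot\tfrac1h\zeta^h(\cdot,hG^h)\,dx\Bigr| \le \widehat M\Bigl(\int_{\{|G^h|>N\}}|\eta^h|^2\,dx\Bigr)^{1/2}\|G^h\|_{L^2} \le \widehat M\,C\sqrt\eps .
\]
On the complementary set one has $h|G^h|\le hN$, so by monotonicity $\widehat r(h|G^h|) \le \widehat r(hN)$ there, and hence
\[
\Bigl|\int_{\{|G^h|\le N\}} \eta^h\cdot\tfrac1h\zeta^h(\cdot,hG^h)\,dx\Bigr| \le \widehat r(hN)\int_\Omega|\eta^h|\,|G^h|\,dx \le \widehat r(hN)\,C^2 \xrightarrow{h\down0} 0 ,
\]
since $N$ is fixed and $\widehat r(t)\down0$ as $t\down0$. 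Combining the two estimates gives $\limsup_{h\down0}\bigl|\int_\Omega \eta^h\cdot\tfrac1h\zeta^h(\cdot,hG^h)\,dx\bigr| \le \widehat M\,C\sqrt\eps$, and since $\eps>0$ is arbitrary the limit is $0$.

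The one genuine obstacle is the region $\{|G^h|>N\}$: the sequence $(G^h)$ is merely bounded in $L^2$ and may concentrate, so its own contribution there is not automatically small. This is precisely where the hypothesis on $(\eta^h)$ is needed — one converts the $L^2$‑bound on $G^h$ into smallness of $|\{|G^h|>N\}|$ through Chebyshev and then feeds this into the equi‑integrability of $(|\eta^h|^2)$. Everything else is a routine domain splitting together with the fact that $hN\to0$ for $N$ fixed.
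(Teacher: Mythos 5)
Your proof is correct and follows essentially the same strategy as the paper: split $\Omega$ into a set where $G^h$ is under control (where $\widehat r(h|G^h|)\to 0$ kills the contribution) and a complementary set of small measure (controlled via Chebyshev and the equi-integrability of $|\eta^h|^2$). The only cosmetic difference is that the paper truncates at the $h$-dependent level $|G^h|\le h^{\alpha-1}$, $0<\alpha<1$, whereas you use a fixed threshold $N$ together with an $\eps$-argument; both are valid. You also make explicit the fact that $\widehat r$ may be taken bounded (a consequence of \ref{(M4)} together with $\widehat r(1)<\infty$), which the paper uses tacitly when applying Hölder on the bad set; that is a helpful clarification rather than a divergence.
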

Note that this immediately implies $h^{-1} \zeta^h(\cdot, hG^h)\weakly 0$ in $L^2(\Omega, \R^{3\times3})$, and especially that $(h^{-1} \zeta^h(\cdot, hG^h))$ is uniformly bounded in $L^2$.

\begin{proof}
Let $0 < \alpha < 1$. We define the sets $S^\alpha_h = \set { x \in \Omega: \quad h\abs{G^h(x)} \leq h^\alpha }$,
and the truncated function $\widehat G^h := G^h \charact{S_h^\alpha}$.
Obviously $h\widehat G^h \to 0$ in $L^\infty$, $G^h = \widehat G^h$ on $S_h^\alpha$, and by Chebyshev inequality we have 
$\Lebesguee{ \Omega \setminus S_h^\alpha } \to 0$ for $h\down 0$.
We can now compute 
\begin{align*}	
  \norm*{ \frac 1 h  \zeta^h(\cdot, h\widehat G^h)}_{L^2}^2 &= \frac 1 {h^2} \int_{\Omega}\abs*{\zeta^h(x, h\widehat G^h)}^2 \td x\\
                                                          &\leq \frac 1 {h^2} \int_{\Omega} \widehat r(\norm{h\widehat G^h}_\infty)^2 \abs{h\widehat G^h}^2 \td x\\
                                                          &\leq \widehat r(\norm{h\widehat G^h}_\infty)^2  \norm{\widehat G^h}_{L^2}^2 \leq \widehat r(\norm{h\widehat G^h}_\infty)^2  \norm{G^h}_{L^2}^2\to 0,
\end{align*}
by the uniform bound of $ G^h$ in the $L^2$-norm. Finally applying Hölder's inequality yields
\[
\abs*{
  \int_\Omega \eta^h \cdot \paren*{\frac 1 h \paren[\Big]{ \zeta^h(x,hG^h) - \zeta^h(x,h\widehat G^h) } }\td x
}
\leq C\int_{S_h} \abs{\eta^h}^2\td x \to 0,
\]
which implies the claim.
\end{proof}
With this result we can deduce the limit PDE in terms of the stress. The part follows closely the corresponding proof in~\cite{MM08}, and thus we skip some details.

\vspace{\baselineskip}

{\bfseries Compactness}

From the properties~\eqref{eq:estimatedifferenceutorot}--\eqref{eq:Rbdry} for the sequence $(R^h)$,
we deduce that there exist a subsequence (not relabeled) and limit $R \subset W^{1,2}((0,L), \SO(3))$ such that $R(0) = \id_{3\times3}$ and $R^h \weakly R$ in $W^{1,2}((0,L), \SO(3))$. Defining $\overline y(x_1) = \int_0^{x_1} R(s) e_1 \td s$, $\overline d_k= R e_k$ for $k=2,3$ we obtain $\overline y \in W^{2,2}_{\bdy}([0,L],\R^3)$, $y^h \to \overline y$ strongly in $W^{1,2}(\Omega, \R^3)$, $\nabla_h y^h \to (\overline y', \overline d_2, \overline d_3)$ strongly in $L^2(\Omega, \R^{3\times3})$, $\overline d_k(0) = e_k$ for $k=2,3$ and $(\overline y, \overline d_2, \overline d_3) \in \admissible$.

\vspace{\baselineskip}

{\bfseries Properties of $E^h$}

We start by using the uniform energy bound of the deformations $(y^h)$, i.e., stationary is not yet needed. Recall the decomposition~\eqref{eq:Ehdecomp}, i.e., 
\[
 E^h = \lina(x) \sym G^h + \frac 1 h  \zeta^h(x, hG^h),
\]
Notice that the uniform bound on $\abs { \lina } \leq C\beta$ given by~\eqref{eq:Qproperties}, the uniform $L^2$ bound on $G^h$ and the uniform $L^2$ bound on the sequence $(h^{-1} \zeta^h(\cdot, hG^h))_{h > 0}$, following from Lemma~\ref{lem:weakvanishingofzeta}, imply 
a uniform $L^2$ bound on the sequence $E^h$. Thus $E^h$ weakly subconverges to some $E \in L^2( \Omega, \R^{3\times3})$.
The frame-indifference~\ref{(M1)} readily implies that $DW^h(\cdot, F)F^T$ is symmetric for every $F \in \R^{3\times3}$ almost everywhere on $\Omega$.
For $F = \id_{3\times3} + hG^h$ the statement $\skewsym ( DW^h(\cdot, F)F^T) = 0$ can be rewritten to
\begin{equation}\begin{aligned}\label{eq:skewsymE}
  \skewsym ( E^h ) = h \skewsym ( G^h (E^h)^T ).
\end{aligned}\end{equation}
From the uniform $L^2$ bound on $E^h$ and $G^h$ we deduce a uniform $L^1$ bound on $( h^{-1} \skewsym (E^h))$.

\vspace{\baselineskip}

{\bfseries Deriving Euler-Lagrange equations}

Since $(y^h)$ are stationary points of $\energyfunction^h$ we obtain for any $\psi \in C^\infty_{\bdy}(\overline \Omega,\R^3)$ the equality
\begin{equation*}\begin{aligned}
\int_\Omega \paren*{DW^h(x, \nabla_h y^h(x)): \nabla_h \psi(x)- h^2 g(x_1)\cdot \psi(x) }\td x = 0.
\end{aligned}\end{equation*}
By density the equation also holds for arbitrary $\psi \in W^{1,2}_{\bdy}(\Omega,\R^3)$.
Using~\eqref{eq:connectionDWE} we rewrite this equation to
\begin{equation}\begin{aligned}\label{eq:fullEL}
\int_\Omega\paren*{ R^h E^h: \nabla_h \psi - h g \cdot \psi }\td x = 0.
\end{aligned}\end{equation}

For $\psi(x) = \varphi(x_1)$ with $\varphi \in C^\infty_{\bdy}([0,L], \R^3)$ the equation~\eqref{eq:fullEL} reduces to
\begin{equation}\label{eq:constEL}
 \int_0^L\int_\omega \paren*{R^h  E^h e_1 \cdot \varphi' - hg \cdot \varphi}\td x' \td x_1 = \int_0^L \paren*{R^h \overline E^h e_1 \cdot \varphi' - hg \cdot \varphi} \td x_1 = 0,
\end{equation}
where $\overline E^h(x_1):= \int_\omega E^h(x_1, x')\td x' \in L^2((0,L), \R^{3\times3})$. Furthermore we denote the first moments with respect to $x_2$ and $x_3$ of $E$ by $\widetilde E, \widehat E \in L^2((0,L), \R^{3\times3})$ respectively; more precisely let
\begin{equation*}\begin{aligned}
\widetilde E(x_1)= \int_\omega x_2 E(x_1, x') \td x'; \quad
\widehat E(x_1)= \int_\omega x_3 E(x_1, x') \td x'.
\end{aligned}\end{equation*}

Let $\phi \in C^\infty_{\bdy}([0,L])$. Then for $\psi(x) = x_2 \phi(x_1) R^h(x_1) e_1 \in W^{1,2}_{\bdy}(\Omega, \R^3)$ we obtain 
\[
  \nabla_h \psi(x) = \begin{pmatrix} x_2 \phi'(x_1) R^h (x_1)e_1 + x_2 \phi(x_1) (R^h)'(x_1)e_1 \;\Big|\; \frac 1 h \phi(x_1) R^h(x_1) e_1 \;\Big|\; 0\end{pmatrix}
\]
and thus~\eqref{eq:fullEL} simplifies to
\begin{align*}	
0 &= \int_\Omega \paren*{R^h E^h: \nabla_h \psi - h g \cdot \psi} \td x \\
  &=\int_0^L \paren*{R^h \widetilde E^he_1 \cdot \phi' R^h e_1 + R^h \widetilde E^he_1\cdot \phi (R^h)'e_1 + \frac 1 h R^h \overline E^h e_2 \cdot \phi R^h e_1} \td x_1.
\end{align*}
Introducing $A^h := (R^h)^T (R^h)'$ this simplifies further to
\begin{equation}\begin{aligned}\label{eq:x2EL}
  \int_0^L \paren*{ \widetilde E^h_{11} \cdot \phi' + \phi \widetilde E^h e_1 \cdot A^h e_1 + \phi \frac 1h \overline E_{12}^h} \td x_1= 0.
\end{aligned}\end{equation}
Analogously for $\psi(x) = x_3 \phi(x_1) R^h(x_1) e_1$ we get
\begin{equation}\begin{aligned}\label{eq:x3EL}
  \int_0^L  \paren*{\widehat E^h_{11} \cdot \phi' + \phi \widehat E^h e_1 \cdot A^h e_1 + \phi \frac 1h \overline E_{13}^h } \td x_1 = 0,
\end{aligned}\end{equation}
and finally $\psi(x) = x_3\phi(x_1)  R^h(x_1) e_2 - x_2\phi(x_1)  R^h(x_1) e_3$ yields
\begin{equation}\begin{aligned}\label{eq:x2x3EL}
  \int_0^L \paren*{\phi' ( \widehat E_{21}^h - \widetilde E_{31}^h ) + \phi ( \widehat E^h e_1 \cdot A^h e_2 - \widetilde E^h e_1 \cdot A^h e_3) + \phi \frac 1 h ( \overline E_{23}^h - \overline E_{32}^h)}\td x_1 =0.
\end{aligned}\end{equation}

\vspace{\baselineskip}

{\bfseries Consequences of the Euler-Lagrange equations}

Now, by stationary of $(y^h)$, the equation~\eqref{eq:constEL} holds for arbitrary $\varphi \in C^\infty_c( (0,L), \R^3)$, and thus
\begin{equation}\begin{aligned}\label{eq:constELptws}
  \overline E^h e_1 = - h{( R^h)}^T \widehat g \quad \text{a.e.\ in } (0,L),
\end{aligned}\end{equation}
especially
\begin{equation}\begin{aligned}\label{eq:constELptwslim}
  \overline E e_1 = 0 \quad \text{a.e.\ in } (0,L).
\end{aligned}\end{equation}
Furthermore the equations~\eqref{eq:x2EL},~\eqref{eq:x3EL} and~\eqref{eq:x2x3EL} imply that
$\widetilde E^h_{11}$, $\widehat E^h_{11}$ and $(\widehat E^h_{21} - \widetilde E^h_{31})$ are weakly differentiable.
The respective derivatives are in $L^1$, as seen by combining~\eqref{eq:skewsymE},~\eqref{eq:constELptws} together with the uniform $L^2$ bound on $A^h$, which was just $(R^h)^T (R^h)'$. 
By Sobolev's Embedding Theorem we thus obtain that
\begin{equation}\begin{aligned}\label{eq:muellersstrongconvergence}
(\widetilde E_{11}^h), (\widehat E_{11}^h), ( \widehat E_{21}^h - \widetilde E_{31}^h ) \text{  converge strongly in } L^2((0,L)).
  \end{aligned}\end{equation}
From this we immediately get the following:
Let $(M^h) \subset L^2( (0,L), \R^{3\times3}_{\skewsym})$ with $M^h \weakly M \in L^2( (0,L), \R^{3\times3}_{\skewsym})$. Then by direct calculation we obtain
\[
  \int_\Omega E^h : \iota ( M^h \pro)  \td x = \int_0^L ( \widetilde E_{31}^h - \widehat E_{21}^h, \widehat E_{11}^h, \widetilde E_{11}^h ) \cdot \axl M^h \td x_1
\]
and thus by applying~\eqref{eq:muellersstrongconvergence} we get
\begin{equation}\begin{aligned}\label{eq:Etimesiotaskew}
\lim_{h\down 0} \int_\Omega E^h : \iota ( M^h \pro)  \td x
= \lim_{h\down 0}\int_\Omega E^h : \iota ( M \pro)  \td x.
\end{aligned}\end{equation}

\vspace{\baselineskip}

{\bfseries The limit of the PDE in terms of the stress}

Fix some $\Phi \in C^\infty_{\bdy}( [0, L], \R^{3\times3}_{\skewsym})$ and let $\phi_1,\phi_2, \phi_3$ be given by $\axl(\Phi) = (\phi_1, \phi_2, \phi_3)$. 
We then define the test functions
\begin{align*}	
  \psi^h(x_1, x_2, x_3) &= R^h(x_1) \Phi(x_1)\pro(x) \\
  & \paren[\Big]{= x_3 \phi_2 R^h e_1 -x_2 \phi_3 R^h e_1 + \phi_1 ( x_2 R^h e_3 - x_3 R^h e_2 )}.
\end{align*}
We compute
\begin{align*}	
\nabla_h \psi^h = \begin{pmatrix}   R^h \Phi' \pro +(R^h)' \Phi \pro
                        \;\Big|\; \frac 1 h R^h \Phi e_2
                        \;\Big|\; \frac 1 h R^h \Phi e_3
 \end{pmatrix},
\end{align*}
and plugging it into~\eqref{eq:fullEL} we obtain
\begin{equation}\begin{aligned}\label{eq:afterinsertingtestfunction}
&\int_\Omega \paren*{R^h E^h: \nabla_h \psi^h - h  g \cdot  \psi^h }\td x   \\
&= \int_\Omega \paren*{ E^h e_1\cdot \Phi' \pro + E^h e_1 \cdot A^h \Phi \pro + \frac 1 h E^h e_2 \cdot \Phi e_2 + \frac 1 h E^h e_3 \cdot \Phi e_3}\td x.
\end{aligned}\end{equation}
By definition we have $\Phi e_2 = \phi_1 e_3 - \phi_3 e_1$ and $\Phi e_3 = \phi_2 e_1 - \phi_1 e_2$ and thus
\begin{align*}	
 E^h e_2 \cdot \Phi e_2 +  E^h e_3 \cdot \Phi e_3
&= ( \phi_1 E^h_{32} - \phi_3 E^h_{12} ) + (\phi_2 E^h_{13}  - \phi_1 E^h_{23})\\
&= 2\phi_1 (\skewsym E^h)_{32}  - 2\phi_{3} (\skewsym E^h)_{12} + 2\phi_2 (\skewsym E^h)_{13}\\
&\quad - \phi_3 E^h_{21} + \phi_2 E^h_{31} \\
&= (\Phi : \skewsym E^h) + ( \phi_2 E^h_{31} - \phi_3 E^h_{21})
\end{align*}
With the preceding calculation it is easy to verify the splitting of~\eqref{eq:afterinsertingtestfunction} into 
\begin{equation}\begin{aligned}\label{eq:PDEsplit}
\int_\Omega \paren*{R^h E^h: \nabla_h \psi^h - h  g \cdot  \psi^h }\td x  = \I^h + \II^h + \III^h,
\end{aligned}\end{equation}
where
\begin{align}
  \label{eq:Idef}\I^h     &:= \int_\Omega  \paren*{E^h e_1 \cdot \Phi' \pro }\td x = \int_\Omega E^h: \iota( \Phi' \pro) \td x, \\
  \notag \II^h   &:= \int_\Omega \paren*{\phi_2 \frac 1h  E_{31}^h - \phi_3 \frac 1h  E_{21}^h} \td x, \\
  \label{eq:IIIdef}\III^h &:= \int_\Omega \paren*{E^h e_1 \cdot A^h \Phi \pro + \frac 1 h \Phi: \skewsym E^h }\td x.
\end{align}
The third one will be the most difficult to handle.

\vspace{\baselineskip}

{\bfseries Regarding $\II^h$}, from~\eqref{eq:constELptws} we obtain $\overline E^h e_1 = -h (R^h)^T \widehat g$ and thus
\begin{equation}\begin{aligned}\label{eq:IIident}
 \II^h = \int_0^L \paren*{\phi_3 \widehat g\cdot (R^he_2) - \phi_2 \widehat g\cdot  (R^he_3)} \td x_1  =\int_0^L \widehat g \cdot (R^h \Phi e_1)\td x_1.
\end{aligned}\end{equation}

{\bfseries Regarding $\III^h$}, we claim that we have 
\begin{equation}\begin{aligned}\label{eq:IIIident}
\lim_{h\down 0}\III^h  &= \lim_{h\down 0}\int_\Omega \paren*{E^h e_1 \cdot (A\Phi  - \Phi A)\pro} \td x.
\end{aligned}\end{equation}

Indeed, recall that from~\eqref{eq:Ghdefinitionu} we have
\begin{equation*}\begin{aligned}
G^h = A^h \pro \otimes e_1+ ( R^h)^T \nabla_h  z^h,
\end{aligned}\end{equation*}
where $z^h$ was defined by~\eqref{eq:zhintro}. By making use of~\eqref{eq:skewsymE} we obtain
\begin{align*}	
\frac 1 h\skewsym (E^h)& =  \skewsym( G^h (E^h)^T )\\ 
&= \skewsym\paren*{\paren[\big]{( R^h)^T \nabla_h  z^h+   A^h \pro \otimes e_1} (E^h)^T}\\
&= \skewsym\paren*{(R^h)^T \nabla_h  z^h (E^h)^T} + \skewsym\paren*{ (A^h \pro \otimes e_1 )(E^h)^T}.
\end{align*}
Furthermore by the skew-symmetry of $\Phi$ we thus obtain
\begin{align}	\label{eq:PhitimesskewE}
  \frac 1 h\Phi: \skewsym(E^h) = \Phi: \paren*{(R^h)^T \nabla_h  z^h (E^h)^T} + \Phi:\paren*{ A^h \pro \otimes E^he_1}.
\end{align}
Note that for any $M \in \R^{n\times n}$ and $v,w \in \R^{n}$ we have the algebraic identity
\[
  M : (v \otimes w) = \tr ( M^T (v\otimes w)) =  \tr( (M^T v) \otimes w) =  (M^T v) \cdot w,
\]
which applied to $M =\Phi, v = A^h\pro, w = E^h e_1$ yields for the second term in~\eqref{eq:PhitimesskewE} the equality
\[
  \Phi: (A^h\pro \otimes E^h e_1) =  - E^h e_1 \cdot (\Phi A^h \pro).
\]
With this we can simplify $\III^h$, given by~\eqref{eq:IIIdef}, to
\begin{equation}\begin{aligned}\label{eq:IIIlevelhoptimal}
\III^h & = \int_\Omega \paren*{E^h e_1 \cdot A^h \Phi \pro + \frac 1 h \Phi: \skewsym E^h }\td x \\
           & = \int_\Omega \paren*{E^h e_1 \cdot (A^h \Phi - \Phi A^h)\pro +\Phi: ((R^h)^T \nabla_h  z^h (E^h)^T) }\td x.
\end{aligned}\end{equation}
We start by proving that,
\begin{equation*}\begin{aligned}
\lim_{h\down 0}\int_\Omega \paren*{E^h e_1 \cdot (A^h \Phi - \Phi A^h)\pro}\td x
=\lim_{h\down 0}\int_\Omega \paren*{E^h e_1 \cdot (A \Phi - \Phi A)\pro}\td x.
\end{aligned}\end{equation*}
which, however, immediately follows from~\eqref{eq:Etimesiotaskew} by setting $M^h :=  A^h \Phi - \Phi A^h$ and $M:= A\Phi - \Phi A$.
If the second term on the right-hand side of~\eqref{eq:IIIlevelhoptimal} vanishes, the claim is proved.

For this we first note that for any $B \in \R^{3\times3}$ and $R \in \SO(3)$ we have $\skewsym B = \skewsym(R B R^T)$. This is a straight-forward computation,
which relies heavily on the fact, that $3\times 3$ skew-symmetric matrices have at most two non-vanishing entries per column and row. With $R := R^h$ and $B :=\nabla_h  z^h (E^h)^T$ we then obtain
\[
\skewsym\paren*{(R^h)^T \nabla_h  z^h (E^h)^T} = \skewsym\paren*{ \nabla_h  z^h (E^h)^T(R^h)^T} =  \skewsym \paren* {\nabla_h  z^h (R^h E^h)^T },
\]
and thus
\begin{equation}\begin{aligned}\label{eq:zerrorterm}
\int_\Omega {\Phi : \paren* {\nabla_h  z^h (R^h E^h)^T }} \td x 
&=\int_\Omega {\paren*{(\nabla_h  z^h)^T\Phi} : \paren* { (R^h E^h)^T }} \td x \\
&= -\int_\Omega   R^h E^h  :\paren*{\Phi \nabla_h  z^h}\td x.
    \end{aligned}\end{equation}
We write the right-hand side of the inner product as a gradient and a lower-order term, i.e.,
$\Phi  \nabla_h  z^h = \nabla_h( \Phi  z^h) - \iota(\Phi'  z^h)$.
Using this identity in~\eqref{eq:zerrorterm} we obtain two terms. For the first one we note that $\Phi$ vanishes at the left boundary, and we might use the Euler-Lagrange equation~\eqref{eq:fullEL} to get
\[
\int_\Omega\paren[\Big]{   R^h E^h  :\nabla_h( \Phi  z^h)}\td x = h \int_\Omega g \cdot ( \Phi  z^h) \td x \to 0.
\]
For the second term we use the strong convergence of $z^h$ and $R^h$ to go to the limit
\begin{equation*}\begin{aligned}
\lim_{h\down0}\int_\Omega   \paren[\Big]{R^h E^h  :\iota( \Phi ' z^h)}\td x = \int_\Omega \paren[\Big]{R Ee_1 \cdot(\Phi'  z) }\td x
                                                              = \int_0^L \paren[\Big]{R \overline Ee_1 \cdot(\Phi'  z)}\td x_1,
\end{aligned}\end{equation*}
where in the last step we used that $R$ and $z$ are independent of $x_2, x_3$. Since $\overline Ee_1 = 0$ by~\eqref{eq:constELptwslim} this term vanishes as well, and the claim~\eqref{eq:IIIident} is thus proved.
\vspace{\baselineskip}

Inserting~\eqref{eq:Idef},~\eqref{eq:IIident} and~\eqref{eq:IIIident} into~\eqref{eq:PDEsplit} we obtain
\begin{equation}\begin{aligned}\label{eq:almostlimit}
\lim_{h\down 0}\int_\Omega \paren*{R^h E^h: \nabla_h \psi - h  g \cdot \psi }\td x   
&=
\lim_{h\down 0}\int_\Omega E^h: \iota( (A\Phi  - \Phi A)\pro + \Phi' \pro) \td x\\
    &\quad+ \lim_{h\down 0}\int_0^L \widehat g \cdot (R^h \Phi e_1)\td x_1.
    \end{aligned}\end{equation}
From the strong convergence $R^h \to R$ in $L^\infty$ we obtain for the second term
\begin{equation}\begin{aligned}\label{eq:limitforces}
\lim_{h\down 0}\int_0^L \widehat g \cdot (R^h \Phi e_1)\td x_1 = \int_0^L \widehat g \cdot (R \Phi e_1)\td x_1,
  \end{aligned}\end{equation}
while for the first term we will show that
\begin{equation}\begin{aligned}\label{eq:lastbendingconvergence}
\lim_{h\down 0}\int_\Omega E^h: \iota( (A\Phi  - \Phi A)\pro + \Phi' \pro) \td x = \frac { \partial  {\mathcal K}_{(h)}}{ \partial m}(m_d)[(A\Phi - \Phi A + \Phi')\pro].
\end{aligned}\end{equation}

\vspace{\baselineskip}

{\bfseries Identification of the limit}

To show~\eqref{eq:lastbendingconvergence} we will first prove the analogue to~\cite[lemma~3.1]{BVP17}, whose approach we will follow from now on.
\begin{lemma}\label{lem:orthequi}
Let $(u^h) \subset W^{1,2}(\Omega, \R^3)$ be such that $\twist(u^h_2, u^h_3) \to 0$ strongly in $L^2$,
  $\paren*{\abs{\sym \nabla_h u^h}^2}$ is equi-integrable and $(u_1^h, h u_2^h, h u_3^h) \to 0$ strongly in $L^2$.
Then for all $ \phi \in C^\infty_{\bdy}( [0,L] )$ we have
\begin{equation}\begin{aligned}
\label{eq:orthwitherror}
\lim_{h\down 0} \int_\Omega \paren[\Big]{\phi \lina G^h: \nabla_h u^h} \td x
&= 0. 
\end{aligned}\end{equation}
\end{lemma}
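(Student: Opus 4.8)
The plan is to test the Euler--Lagrange equation \eqref{eq:fullEL} against a well-chosen corrector-type test function built from $u^h$ and $\phi$, and to use the decomposition \eqref{eq:Ehdecomp} together with Lemma~\ref{lem:weakvanishingofzeta} to reduce the claim to a statement purely about $\lina \sym G^h$. Concretely, first I would write $\lina G^h = \lina \sym G^h = E^h - \frac1h\zeta^h(\cdot,hG^h)$. Since $(\abs{\sym\nabla_h u^h}^2)$ is equi-integrable, the sequence $(\phi\,\sym\nabla_h u^h)$ is a legitimate test sequence for Lemma~\ref{lem:weakvanishingofzeta}, and because $\lina G^h:\nabla_h u^h = \lina G^h:\sym\nabla_h u^h$, the error term $\int_\Omega \phi\,\frac1h\zeta^h(\cdot,hG^h):\nabla_h u^h\,\td x$ tends to zero. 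Thus it remains to prove
\[
  \lim_{h\down 0}\int_\Omega \phi\, E^h:\nabla_h u^h\,\td x = 0,
\]
i.e.\ to bound the genuine stress against the test gradient.

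Next I would turn $\phi\,\nabla_h u^h$ into an honest scaled gradient plus controllable lower-order terms, so as to invoke \eqref{eq:fullEL}. Write $\phi\,\nabla_h u^h = \nabla_h(\phi u^h) - \iota(\phi' u^h)$. For the gradient part, $\psi := \phi u^h$ is admissible (it vanishes where $\phi(0)=0$, hence lies in $W^{1,2}_\bdy$), so \eqref{eq:fullEL} gives
\[
  \int_\Omega R^h E^h:\nabla_h(\phi u^h)\,\td x = h\int_\Omega g\cdot(\phi u^h)\,\td x,
\]
but here I need $E^h:\nabla_h(\phi u^h)$, not $R^h E^h:\nabla_h(\phi u^h)$; so I first split $E^h:\nabla_h(\phi u^h) = R^h E^h:\nabla_h(\phi u^h) + (\id-R^h)E^h:\nabla_h(\phi u^h)$. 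The first summand is $O(h)$ by the displayed identity. The second is controlled because $\norm{\id - R^h}_{L^\infty}\to 0$ by the compactness proposition (via \eqref{eq:estimatederivativeR} and Sobolev embedding in one variable), while $\norm{E^h}_{L^2}$ is bounded and $\norm{\nabla_h(\phi u^h)}_{L^2}$ is bounded (using the equi-integrability/boundedness of $\sym\nabla_h u^h$ together with the fact that $\nabla_h u^h$ differs from a map with symmetric part bounded in $L^2$ only through a skew-symmetric piece controlled via $\twist(u_2^h,u_3^h)\to 0$ and $(u_1^h,hu_2^h,hu_3^h)\to 0$, exactly as in the decompositions recalled before \eqref{eq:symGhdecomp}). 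Hence the gradient part contributes zero in the limit.

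For the remaining lower-order term $\int_\Omega E^h:\iota(\phi' u^h)\,\td x = \int_\Omega E^h e_1\cdot(\phi' u^h)\,\td x$, I would again split off $R^h$: $E^h e_1\cdot(\phi' u^h) = (R^h E^h e_1)\cdot(\phi'(R^h)^T u^h)$ plus an $o(1)$ error as above, and then note that only the $x_2,x_3$-average of $E^h e_1$ survives after integrating $\phi'$ against $u^h$ in a suitable way — more precisely, I would use the pointwise identity \eqref{eq:constELptws}, $\overline E^h e_1 = -h(R^h)^T\widehat g \to 0$, to kill the relevant averaged contribution, together with the strong convergence $(u_1^h,hu_2^h,hu_3^h)\to 0$ to handle the parts of $u^h$ paired against non-averaged components; the equi-integrability of $\sym\nabla_h u^h$ gives the compactness needed to pass these products to the limit. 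Assembling the three pieces yields \eqref{eq:orthwitherror}. The main obstacle I anticipate is the bookkeeping in this last step: one must track carefully which components of $E^h$ (the full $3\times3$ matrix, only part of which is averaged-controlled by \eqref{eq:constELptws}) are paired with which components of $u^h$ (only the scaled combinations $u_1^h, hu_2^h, hu_3^h$ and $\twist(u_2^h,u_3^h)$ are known to vanish), and to see that every surviving product either carries a factor $h$, or is an averaged quantity killed by \eqref{eq:constELptws}, or pairs a bounded sequence with something going strongly to zero.
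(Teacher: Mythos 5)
The core difficulty that this lemma addresses is that the hypotheses only control $\sym\nabla_h u^h$ (and $\twist$, and the scaled components $(u^h_1,hu^h_2,hu^h_3)$), while the full scaled gradient $\nabla_h u^h$ and the individual components $u^h_2,u^h_3$ may blow up like $1/h$. Your plan never confronts this, and consequently several steps fail.

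First, after the (correct) reduction $\phi\lina G^h:\nabla_h u^h=\phi\lina\sym G^h:\sym\nabla_h u^h=\phi E^h:\sym\nabla_h u^h-\phi\tfrac1h\zeta^h:\sym\nabla_h u^h$, the $\zeta^h$ term indeed dies via Lemma~\ref{lem:weakvanishingofzeta} because $(\abs{\sym\nabla_h u^h}^2)$ is equi-integrable. But what remains is $\int\phi E^h:\sym\nabla_h u^h$, not $\int\phi E^h:\nabla_h u^h$; since $E^h$ is not symmetric, the difference is $\int\phi\skewsym E^h:\skewsym\nabla_h u^h=\int\phi\tfrac1h\skewsym\zeta^h:\skewsym\nabla_h u^h$, and that can only be killed by Lemma~\ref{lem:weakvanishingofzeta} if $(\abs{\skewsym\nabla_h u^h}^2)$ were equi-integrable — which is \emph{not} assumed and generically false. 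So the passage to $\int\phi E^h:\nabla_h u^h$ is already a genuine gap. Second, your step that tests \eqref{eq:fullEL} with $\psi=\phi u^h$ relies on $\nabla_h(\phi u^h)$ being bounded in $L^2$; this again requires control of $\skewsym\nabla_h u^h$ which is absent. Third, your assertion that $\norm{\id-R^h}_{L^\infty}\to0$ is simply false: by the compactness step $R^h\to R$ strongly in $L^\infty$, where $R\in W^{1,2}((0,L),\SO(3))$ is the limiting rotation with $R(0)=\id_{3\times3}$ but $R\not\equiv\id$ in general. What is true, and what the paper actually uses, is $h(R^h)'\to0$ in $L^\infty$. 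Fourth, even if these were repaired, the lower-order term $\int_\Omega E^h:\iota(\phi' u^h)=\int_\Omega E^he_1\cdot(\phi' u^h)$ pairs the (only bounded) entries $E^h_{21},E^h_{31}$ against $u^h_2,u^h_3$, which may be of size $1/h$ in $L^2$; the averaged identity $\overline E^he_1=-h(R^h)^T\widehat g$ from~\eqref{eq:constELptws} only cancels the $\omega$-average of $E^he_1$, and the fluctuation $u^h_j-\overline u^h_j$ is controlled by Poincar\'e only in terms of $\nabla_h u^h$, which is again uncontrolled.

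What is missing is precisely the Griso-type decomposition of Proposition~\ref{prop:grisoa}: one replaces $\sym\nabla_h u^h$ by $\sym\iota((B^h)'\pro)+\sym\nabla_h\vartheta^h+o^h$, where now $(B^h)'$ and the \emph{full} scaled gradient $\nabla_h\vartheta^h$ are $L^2$-bounded with equi-integrable squares, $B^h,\vartheta^h,o^h\to0$ strongly, and $\vartheta^h\in W^{1,2}(\Omega,\R^3)$ is a bona fide corrector. The paper then removes the $\sym$ by trading $\skewsym E^h$ for $\tfrac1h\skewsym\zeta^h$ and applying Lemma~\ref{lem:weakvanishingofzeta} to the now-equi-integrable skew parts; it disposes of the $\iota((B^h)'\pro)$ contribution via the first-moment consequence~\eqref{eq:Etimesiotaskew}; and it tests the Euler--Lagrange equation with $\psi=R^h\phi\vartheta^h$ (the $R^h$-conjugation is essential here, avoiding any appeal to $R^h\to\id$), controlling the leftover $\iota((R^h\phi)'\vartheta^h)$ term by Poincar\'e on $\vartheta^h-\overline\vartheta^h$ (which uses the $L^2$-bound on $\nabla_h\vartheta^h$) together with $h(R^h)'\to0$ in $L^\infty$ and $\overline\vartheta^h\to0$ in $L^\infty$. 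Your proposal bypasses all of this by working with $u^h$ directly, and that is exactly where it breaks down.
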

\begin{proof}
Fix some $\phi \in C^\infty_{\bdy}( [0,L] )$ and let $(u^h)$ be as assumed in the lemma. By Prop.~\ref{prop:grisoa} there exists a constant $C_\omega > 0$, depending only on $\omega$, and sequences $(B^h) \subset W^{1,2}((0,L), \R^{3\times3}_{\skewsym})$, $(\vartheta^h) \subset W^{1,2}(\Omega,\R^3)$ and $(o^h) \subset L^2(\Omega, \R^{3\times 3})$ with
\begin{equation}\begin{aligned}\label{eq:orthudecomp}
 \sym \nabla_h u^h = \sym \iota ( (B^h)' \pro) + \sym \nabla_h \vartheta^h + o^h,
\end{aligned}\end{equation}
that, in addition, satisfy the bounds
\[
  \norm{ B^h }_{W^{1,2}} + \norm {\vartheta^h}_{L^2} + \norm{{ \nabla_h \vartheta^h }}_{L^2} \leq C_\omega \norm { \sym \nabla_h u^h}_{L^2}.
\]
Furthermore $B^h,\vartheta^h, o^h\to0$ strongly in $L^2$, and $(\abs{(B^h)'}^2)$, $(\abs { \nabla_h \vartheta^h}^2)$ are both equi-integrable.
Using~\eqref{eq:Ehdecomp} we can write~\eqref{eq:orthwitherror} as
\begin{equation}\begin{aligned}\label{eq:convsymuh}
\int_\Omega \paren[\Big]{\phi \lina \sym G^h: \sym (\nabla_h u^h)} \td x &= \int_\Omega \paren[\Big]{\phi E^h: \sym (\nabla_h u^h)}\td x \\
   &\quad- \frac 1 h\int_\Omega \paren[\Big]{\phi  \zeta^h(x,hG^h) : \sym (\nabla_h u^h)}\td x.
\end{aligned}\end{equation}
 The first term on the right-hand side can be decomposed with~\eqref{eq:orthudecomp} to
 \begin{equation}\begin{aligned}\label{eq:orthuinserted}
 \int_\Omega \paren*{\phi E^h: \sym (\nabla_h u^h) }\td x 
   = \int_\Omega \paren[\Big]{\phi E^h: \sym \paren[\big]{\iota ((B^h)' \pro ) +  \nabla_h \vartheta^h + o^h} }\td x.
   \end{aligned}\end{equation}
Clearly the term containing $o^h$ vanishes in the limit. 
By symmetry of $\lina$ we have $\skewsym E^h = \frac 1 h \skewsym \zeta^h(\cdot, hG^h)$ and thus write
\begin{equation}\begin{aligned}\label{eq:skeweiszetausage}
&\int_\Omega \paren[\Big]{\phi E^h: \sym \paren[\big]{\iota ((B^h)' \pro ) +  \nabla_h \vartheta^h } }\td x\\
&\quad=\int_\Omega \paren[\Big]{\phi E^h:  \paren[\big]{\iota ((B^h)' \pro ) +  \nabla_h \vartheta^h } }\td x \\
  &\qquad-\int_\Omega \paren[\Big]{\phi \frac 1 h \zeta^h(x, hG^h): \skewsym \paren[\big]{\iota ((B^h)' \pro ) +  \nabla_h \vartheta^h } }\td x.
  \end{aligned}\end{equation}

Combining~\eqref{eq:orthuinserted} with~\eqref{eq:skeweiszetausage} yields
\begin{equation*}\begin{aligned}
\lim_{h\down 0}\int_\Omega \paren[\Big]{\phi \lina G^h: \sym (\nabla_h u^h) }\td x
 &= \lim_{h \down 0} \int_\Omega \paren[\Big]{\phi E^h:  \paren[\big]{\iota ((B^h)' \pro ) +  \nabla_h \vartheta^h } }\td x \\
  &-\lim_{h\down 0}\int_\Omega \phi \frac 1 h \zeta^h(x, hG^h):  \paren[\Big]{\iota ((B^h)' \pro ) +  \nabla_h \vartheta^h } \td x.
\end{aligned}\end{equation*}

We start with the first term on the right-hand side, i.e.,
\[
  \int_\Omega \paren[\Big]{\phi E^h:  \iota ((B^h)' \pro ) }\td x.
\]
By applying~\eqref{eq:Etimesiotaskew} with $M^h = (B^h)'$ and $M =0$ we see that this term vanishes in the limit. Secondly we study 
\[
\int_\Omega\paren[\Big]{ \phi E^h:  \nabla_h \vartheta^h} \td x,
\]
and for this we rewrite the term to
\begin{equation}\begin{aligned}\label{eq:partoflemma32}
\int_\Omega\paren[\Big]{ \phi E^h:  \nabla_h \vartheta^h} \td x
&=\int_\Omega \paren[\Big]{\phi R^h E^h: R^h \nabla_h \vartheta^h} \td x \\
&= \int_\Omega \paren[\Big]{R^hE^h: \nabla_h (R^h\phi \vartheta^h) }\td x -\int_\Omega\paren[\Big]{ R^hE^h:  \iota((R^h\phi)' \vartheta^h ) }\td x.
\end{aligned}\end{equation}
For the first term on the right-hand side we use the Euler-Lagrange equation and obtain
\[
\int_\Omega E^h: \nabla_h (R^h\phi \vartheta^h) \td x =h \int_\Omega  g \cdot (R^h\phi \vartheta^h) \to 0,
\]
while we split once more the second term on the right-hand side of~\eqref{eq:partoflemma32} into 
\begin{equation}\begin{aligned}\label{eq:orthlesserorder}
\int_\Omega \paren[\Big]{R^h E^h:  \iota((R^h\phi)' \vartheta^h ) }\td x 
& =\int_\Omega \paren[\Big] {R^h E^he_1 \cdot  (R^h\phi)'( \vartheta^h - \overline \vartheta^h) } \td x \\
   &\quad +\int_\Omega \paren[\Big]{R^h E^he_1 \cdot  (R^h\phi)' \overline \vartheta^h } \td x,
   \end{aligned}\end{equation}
where $\overline \vartheta^h(x_1) = \int_\omega \vartheta(x_1, x') \td x$. 
By the uniform bound of $h (R^h)''$ in $L^2$, stated in~\eqref{eq:estimatederivativeR}, we obtain the uniform bound of $h (R^h)'$ in $W^{1,2}$. From the compact Sobolev embedding we obtain that $(h (R^h)')$ is strongly compact in $L^\infty$. Since $(R^h)'$ is bounded in $L^2$, we have $h (R^h)' \to 0$ strongly in $L^2$. By uniqueness of the limit we have $h(R^h)' \to 0$ strongly in $L^\infty$. 
We apply Poincar\'e's inequality and obtain
\[
  \norm { \vartheta^h - \overline \vartheta^h}_{L^2} \leq C \norm { \partial_2 \vartheta^h}_{L^2} \leq C h \norm{ \nabla_h \vartheta^h }_{L^2} \leq C h.
\]
This bound, together with $h (R^h)' \to 0$ strongly in $L^\infty$, implies
\[
h(R^h\phi)'\frac{( \vartheta^h - \overline \vartheta^h)}h \to 0 \quad \text{ strongly in } L^2,
\]
while for the second term in~\eqref{eq:orthlesserorder} we use Sobolev embedding to obtain $\overline \vartheta^h \to 0$ in $L^\infty$. Combining both we conclude the vanishing of~\eqref{eq:orthlesserorder}.
Finally for the last remaining term in~\eqref{eq:convsymuh}, namely
\[
  \lim_{h\down 0}\int_\Omega \phi(x_1) \frac 1 h \zeta^h(x, hG^h):  \paren[\Big]{\iota ((B^h)' \pro ) +  \nabla_h \vartheta^h } \td x,
\] we use that $(\abs{(B^h)'}^2)$ and $(\abs{\nabla_h \vartheta^h}^2)$ are equi-integrable, and thus by virtue of Lemma~\ref{lem:weakvanishingofzeta} this term vanishes as well. This finishes the proof of the lemma.
\end{proof}
We finally prove~\eqref{eq:lastbendingconvergence}. For this we decompose $E^h$ into $E^h = \lina G^h+ \frac 1 h \zeta^h(\cdot, hG^h)$ and apply Lemma~\ref{lem:weakvanishingofzeta} to obtain
\begin{equation}\begin{aligned}\label{eq:mainproofapproxEbyA}
&\lim_{h\down 0}\int_\Omega E^h: \iota( (A\Phi  - \Phi A)\pro + \Phi' \pro) \td x \\
  &\hspace{2cm}= \lim_{h\down 0}\int_\Omega \lina G^h: \iota( (A\Phi  - \Phi A)\pro + \Phi' \pro) \td x.
\end{aligned}\end{equation}
From the decomposition~\eqref{eq:symGhdecomp} we get 
\[
  \sym G^h = \sym \iota(m_d) + \sym \nabla_h \psi^h + o^h 
\]
for the fixed part $m_d \in L^2(\Omega, \R^3)$ and the corrector sequence $\psi^h$ introduced in~\eqref{eq:definitionmd} and~\eqref{eq:definitioncorrector} respectively, and the sequence $o^h$, which converges strongly to zero in $L^2$.

We show that $\sym \nabla_h\psi^h$ and $\sym \nabla_h\psi^h_{m_d}$ are, up to $L^2$-concentration, close in $L^2$, where $(\psi^h_{m_d})$ is the relaxation sequence given by Lemma~\ref{lem:orthabstract}.
Indeed, we first use identity~\eqref{eq:sympsidecomp} to obtain
\[
  \sym \nabla_h \psi^h = \sym \iota ( {(\Psi^h)}'\pro) + \sym \nabla_h v^h,
\]
where $\Psi^h, v^h$ are defined prior to this decomposition.
By applying~\cite[lemma~2.17]{MV16} to $(\Psi^h)$ and $(v^h)$, we obtain a subsequence $(h)$ (not relabeled), a sequence of measurable sets $O^h$ with $\lim_{h\down 0}\Lebesgueee{\Omega \setminus O^h} = 0$ and a sequences $(\widetilde \Psi^h), (\widetilde v^h)$ such that  $( \abs {( \widetilde\Psi^h)'}^2, \abs{(\nabla_h\widetilde v^h)}^2$ are equi-integrable and
\[
  \norm { (\Psi^h - \widetilde \Psi^h)'}_{L^2(O^h)} + \norm { \nabla_h ( v^h - \widetilde v^h)}_{L^2(O^h)}  \to 0.
\] 
By~\eqref{prop:grisob} there exists $(\widetilde \psi^h) \subset W^{1,2}(\Omega,\R^3)$ such that 
\[
  \sym \nabla_h \widetilde \psi^h  = \sym \iota (( \widetilde \Psi^h )'\pro) + \sym \nabla_h \widetilde v^h.
\]
By construction it satisfies
\begin{equation}\begin{aligned}\label{eq:propertieswidetilde}
(\abs { \sym \nabla_h \widetilde \psi^h}^2) \; \text{ is equi-integrable, and }\;\lim_{h\down 0} \norm{ \sym (\nabla_h \psi^h - \nabla_h \widetilde \psi^h)}_{L^2(O^h)} = 0.
\end{aligned}\end{equation}
Furthermore, for any $0 < a < L$ we decompose the domain of integration and obtain
\begin{equation}\begin{aligned}\label{eq:decompdomain}
& \norm{\sym \nabla_h (\widetilde\psi^h - \psi^h_{m_d})}_{L^2(\Omega)}^2 \\
&= \norm{\sym \nabla_h (\widetilde\psi^h - \psi^h_{m_d})}_{L^2((0,a) \times \omega)}^2 + \norm{\sym \nabla_h (\widetilde\psi^h - \psi^h_{m_d})}_{L^2((a,L) \times \omega)}^2.
\end{aligned}\end{equation}
For the second term on the right-hand side we use use the coercivity of $Q^h$ to obtain
\begin{align*}	
\alpha \norm{\sym \nabla_h (\widetilde\psi^h - \psi^h_{m_d})}_{L^2((a,L)\times \omega)}^2
&\leq \frac 1 2 \int_{(a,L)\times \omega} \lina  \nabla_h (\widetilde\psi^h - \psi^h_{m_d}): \nabla_h (\widetilde\psi^h - \psi^h_{m_d}).
\end{align*}	
Let $\rho \in C^\infty([0,L])$ be a cut-off function such that $\rho \geq 0$, $\rho = 0$ on $[0,a/2]$ and $\rho = 1$ on $[a,L]$.
With this we calculate
\begin{align*}	
\alpha \norm{\sym \nabla_h (\widetilde\psi^h - \psi^h_{m_d})}_{L^2((a,L)\times \omega)}^2
&\leq \frac 1 2 \int_\Omega \rho \lina  \nabla_h (\widetilde\psi^h - \psi^h_{m_d}): \nabla_h (\widetilde\psi^h - \psi^h_{m_d}) \\
&=\frac 1 2  \int_\Omega \rho\lina (\iota(m_d) +  \nabla_h \widetilde\psi^h) : \nabla_h (\widetilde\psi^h - \psi^h_{m_d}) \\
&\quad-\frac 1 2  \int_\Omega\rho \lina (\iota(m_d) +  \nabla_h \psi^h_{m_d}) : \nabla_h (\widetilde\psi^h - \psi^h_{m_d}).
\end{align*}
The second term vanishes by virtue of Lemma~\ref{lem:orthabstract}, while for the second one we use the decomposition~\eqref{eq:symGhdecomp}, i.e.,
\[
  \sym G^h = \sym \iota( m_d ) + \sym \nabla_h \psi^h + o^h,
\] to write
\begin{align*}	
  \int_\Omega \rho \lina (\iota(m_d) +  \nabla_h \widetilde\psi^h) : \nabla_h (\widetilde\psi^h - \psi^h_{m_d}) 
  &=\int_\Omega \rho \lina (G^h + o^h): \nabla_h (\widetilde\psi^h - \psi^h_{m_d}) \\
  &\;+ \int_\Omega \rho \lina (\nabla_h(\widetilde \psi^h - \psi^h)) : \nabla_h (\widetilde\psi^h - \psi^h_{m_d}).
 \end{align*}
The sequence $o^h$ converges strongly to $0$ and thus the term containing it vanishes in the limit. By Prop.~\ref{lem:orthabstract} the sequence $\abs{\sym ( \nabla_h \psi^h_{m_d})}^2$ and, by construction, the sequence $\abs{\sym (\nabla_h \widetilde \psi^h)}^2$ are both equi-integrable. Thus by applying Lemma~\ref{lem:orthequi} the first term vanishes. For the second one we decompose $\Omega = O^h \union (\Omega \setminus O^h)$ and estimate with Hölder's inequality
\begin{equation}\begin{aligned}\label{eq:psiapproximationstep}
&\abs*{\int_\Omega  \rho \lina \nabla_h(\widetilde \psi^h -  \psi^h) : \nabla_h (\widetilde\psi^h - \psi^h_{m_d})} \\
    &\leq \beta \abs*{\int_\Omega  \sym\nabla_h(\widetilde \psi^h -  \psi^h) : \sym \nabla_h (\widetilde\psi^h - \psi^h_{m_d})} \\
    &\leq \beta\norm{\sym\nabla_h(\widetilde \psi^h -  \psi^h)}_{L^2(O^h)}\norm{\sym\nabla_h(\widetilde \psi^h -  \psi^h_{m_d})}_{L^2(\Omega)}\\
    & \quad+ \beta\norm{\sym\nabla_h(\widetilde \psi^h -  \psi^h)}_{L^2(\Omega)}\norm{\sym\nabla_h(\widetilde \psi^h -  \psi^h_{m_d})}_{L^2(\Omega \setminus O^h)}.
    \end{aligned}\end{equation}
First note that
\[
\norm{\sym\nabla_h(\widetilde \psi^h -  \psi^h_{m_d})}_{L^2(\Omega)}, \quad \norm{\sym\nabla_h(\widetilde \psi^h -  \psi^h)}_{L^2(\Omega)}
\]
are uniformly bounded in $h$. Furthermore utilizing~\eqref{eq:propertieswidetilde} we obtain that 
\[
\lim_{h\down 0}\norm{\sym\nabla_h(\widetilde \psi^h -  \psi^h)}_{L^2(O^h)} = 0,
\] and thus the first term on the right-hand side vanishes. For the second term on the right-hand side of~\eqref{eq:psiapproximationstep} we apply the equi-integrability of $(\abs{\sym\nabla_h(\widetilde \psi^h -  \psi^h_{m_d})}^2)$ together with $\Lebesgueee{\Omega \setminus  O^h} \to 0$ for $h\down 0$, and obtain that the term vanishes as well.

Returning to~\eqref{eq:decompdomain}, we take a sequence $a = a(h)$ with $a(h) \down 0$ for $h \down 0$ such that
\begin{equation*}\begin{aligned}
 \lim_{h\down 0} \norm{\sym \nabla_h (\widetilde\psi^h - \psi^h_{m_d})}_{L^2((a(h),L) \times \omega)} = 0.
\end{aligned}\end{equation*}
By equi-integrability we also obtain
\begin{equation*}\begin{aligned}
 \lim_{h\down 0} \norm{\sym \nabla_h (\widetilde\psi^h - \psi^h_{m_d})}_{L^2((0, a(h)) \times \omega)} = 0,
\end{aligned}\end{equation*}
and thus
\begin{equation*}\begin{aligned}
 \lim_{h\down 0} \norm{\sym \nabla_h (\widetilde\psi^h - \psi^h_{m_d})}_{L^2(\Omega)}  =0.
\end{aligned}\end{equation*}

\vspace{\baselineskip}

Returning to~\eqref{eq:mainproofapproxEbyA}, we first approximate $\sym \nabla_h \psi^h$ by $\sym \nabla_h \widetilde \psi^h$, and then the latter by $\sym \nabla_h \psi^h_{m_d}$, thus obtaining
\begin{equation}\begin{aligned}\label{eq:bendingidentified}
&\lim_{h\down 0}\int_\Omega E^h: \iota( (A\Phi  - \Phi A)\pro + \Phi' \pro) \td x \\
&\quad= \lim_{h\down 0}\int_\Omega \lina G^h: \iota( (A\Phi  - \Phi A)\pro + \Phi' \pro) \td x\\
&\quad= \lim_{h\down 0}\int_\Omega \lina(\iota(m_d) + \nabla_h \psi^h_{m_d}): \iota( (A\Phi  - \Phi A)\pro + \Phi' \pro) \td x\\
&\quad=\frac { \partial  \mathcal K_{(h)}}{ \partial m}(m_d)[(A\Phi - \Phi A + \Phi')\pro].
\end{aligned}\end{equation}

We combine~\eqref{eq:almostlimit},~\eqref{eq:limitforces} and~\eqref{eq:bendingidentified}, obtaining
\begin{equation}\begin{aligned}\label{eq:limitequationnonopt}
0=\lim_{h\down 0}\int_\Omega \paren*{R^h E^h: \nabla_h \psi - h \widehat g \cdot \partial_1 \psi }\td x   
&=
\frac { \partial  \mathcal K_{(h)}}{ \partial m}(m_d)[(A\Phi - \Phi A + \Phi')\pro] \\
&\quad-\int_0^L \widehat g \cdot (R \Phi e_1)\td x_1.
    \end{aligned}\end{equation}
If 
\begin{equation}\begin{aligned}\label{eq:lastclaim}
\frac { \partial  \mathcal K_{(h)}}{ \partial m}(m_d)[(A\Phi - \Phi A + \Phi')\pro]
=
 \paren*{\frac { \partial}{\partial B}\mathcal K^0_{(h)}}(A) [A\Phi -\Phi A + \Phi']
\end{aligned}\end{equation}
holds, then~\eqref{eq:limitequationnonopt} reads
\begin{equation*}\begin{aligned}
 \paren*{ \frac { \partial}{\partial B}\mathcal K^0_{(h)}}(A) [A\Phi -\Phi A + \Phi']
 = \int_0^L \widehat g \cdot (R\Phi e_1) \td x_1,
\end{aligned}\end{equation*}
and by Lemma~\ref{lem:stationarylimit} this is equivalent to $(\overline y,\overline d_2, \overline d_3)$ being a stationary point of $\energyfunction^0$. 

After replacing both sides in~\eqref{eq:lastclaim} by the more explicit representations~\eqref{eq:mderivativeK} and~\eqref{eq:BderivativeofK0}, we see that it suffices to show that the fixed part $m_d$ is given by $m(A,  b_{\min}(A))$. By definition of $m_d$ in~\eqref{eq:definitionmd} we have $m_d = m(A, p_1)$ where $p_1$ is some $L^2$ function. By the characterization given in~\eqref{eq:widetildebdef} the equality $p_1 =  b_{\min}(A)$ follows, if
\[
\frac { \partial \mathcal K_{(h)}(m(A,\cdot))}{\partial b}(p_1)[\mu] = 0 \quad \text{ for all } \mu \in L^2((0,L)).
\] 
Using~\eqref{eq:limitELnonopt} we see that this is equivalent to
\begin{equation}\begin{aligned}\label{eq:neededequationforoptimality}
\lim_{h\down 0}\int_\Omega \lina(\iota(m_d)) + \sym \nabla_h \psi^h_{m_d}): \iota(\mu e_1)\td x = 0 \quad \text{ for all } \mu \in L^2((0,L)).
\end{aligned}\end{equation}
Similar to before we can replace $\sym \nabla_h \psi^h_{m_d}$ by $\sym \nabla_h \psi^h$. Then we can approximate $\lina G^h$ by $E^h$ with Lemma~\ref{lem:weakvanishingofzeta}, and the statement~\eqref{eq:neededequationforoptimality} is then seen to be equivalent to
\[
\lim_{h\down 0}\int_0^L \overline E^h_{11}\mu \td x = 0 \quad \text{ for all } \mu \in L^2((0,L)),
\]
which now easily follows from~\eqref{eq:constELptwslim}.

\vspace{2\baselineskip}
{\bfseries Acknowledgment:}
The author thanks his PhD advisor Peter Hornung as well as Igor Vel\v ci\'c for many valuable discussions, encouragement and general support.
The author was supported by DFG under Grant~agreement~No.~HO4697/1-1.

\appendix

\section{Appendix}
For convenience of the reader we recall a type of decomposition introduced in~\cite{GrisoRod,GrisoThin}. More precisely the variant proved in~\cite[corollary 2.3, lemma 2.4]{MV16}
\begin{prop}\label{prop:grisoa}
Let $L >0$ and $\Omega = ( 0,L ) \times \omega$, where $\omega$ is an open, connected bounded Lipschitz-domain, which is centered at the origin in the sense of~\eqref{eq:omegacentered}.
Let $(u^h) \subset W^{1,2}(\Omega, \R^3)$ with $\twist(u_2^h, u_3^h) \to 0$ in $L^2((0,L))$,
\[
  \sup_{h > 0} \norm[\big]{\sym { \nabla_h u^h }}_{L^2} < \infty \quad \text{ and } \quad (u_1^h, h u_2^h, h u_3^h) \to 0 \text{ strongly in } L^2(\Omega, \R^3).
\]
Then there exists a constant $C_\omega > 0$, depending only on $\omega$, and sequences $(B^h) \subset W^{1,2}((0,L), \R^{3\times3}_{\skewsym})$, $(\vartheta^h) \subset W^{1,2}(\Omega,\R^3)$ and $(o^h) \subset L^2(\Omega, \R^{3\times 3})$ with
\begin{equation*}\begin{aligned}
 \sym \nabla_h u^h = \sym \iota ( (B^h)' \pro) + \sym \nabla_h \vartheta^h + o^h,
\end{aligned}\end{equation*}
and satisfy the bounds
\[
  \norm{ B^h }_{W^{1,2}} + \norm {\vartheta^h}_{L^2} + \norm{{ \nabla_h \vartheta^h }}_{L^2} \leq C_\omega \norm { \sym \nabla_h u^h}_{L^2}.
\]
Furthermore $B^h, o^h, \vartheta^h \to 0$ strongly in $L^2$. If, in addition, $(\abs { \sym \nabla_h u^h }^2)$ is equi-integrable, then so are $(\abs{(B^h)'}^2)$ and $(\abs { \nabla_h \vartheta^h}^2)$.
\end{prop}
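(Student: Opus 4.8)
The plan is to deduce the statement from the rod decomposition of Griso~\cite{GrisoRod,GrisoThin}; in fact it coincides with \cite[corollary~2.3 and lemma~2.4]{MV16}, so one may simply cite it, but it is worth outlining the ingredients. First I would rescale the thin rod $(0,L)\times h\omega$ onto $\Omega$ (displacements divided by $h$, so that $\nabla_h$ becomes the natural gradient) and apply Griso's decomposition there. This produces, for each $h>0$, a cross-sectional mean $\mathcal U^h\in W^{1,2}((0,L),\R^3)$ with $\mathcal U^h(x_1)=\int_\omega u^h(x_1,\cdot)\,\td x'$, an averaged infinitesimal rotation $B^h\in W^{1,2}((0,L),\R^{3\times3}_{\skewsym})$ of the cross-sections, and a warping $\vartheta^h\in W^{1,2}(\Omega,\R^3)$ with $\int_\omega\vartheta^h(x_1,\cdot)\,\td x'=0$ for a.e.\ $x_1$, such that $u^h=\mathcal U^h+B^h\pro+\vartheta^h$. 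Differentiating and symmetrising, the first column of $\nabla_h u^h$ yields $\sym\iota\bigl((B^h)'\pro\bigr)$ plus a lower-order piece $\sym\iota\bigl((\mathcal U^h)'\bigr)$, while the two cross-sectional columns yield $h^{-1}\sym(B^h D)$ with $D=\operatorname{diag}(0,1,1)$, which for skew $B^h$ equals $-h^{-1}\sym\iota(B^h e_1)$; collecting everything other than $\sym\iota((B^h)'\pro)$ and $\sym\nabla_h\vartheta^h$ into $o^h$ — and absorbing the $\sym$-exact, $x_1$-dependent parts into $\vartheta^h$, which is permitted since such parts are $\sym\nabla_h$ of maps of $x_1$ alone — one arrives at the claimed identity. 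That the residual $o^h$ tends to $0$ strongly in $L^2$, and not merely stays bounded, is the one point that genuinely uses the hypotheses $u_1^h\to0$ and $\twist(u_2^h,u_3^h)\to0$; it is worked out in \cite{MV16} with the help of the truncation discussed below.

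For the bounds I would invoke the Griso--Korn inequalities for thin rods in their rescaled form, $\norm{(B^h)'}_{L^2((0,L))}+\norm{\nabla_h\vartheta^h}_{L^2(\Omega)}\leq C_\omega\norm{\sym\nabla_h u^h}_{L^2(\Omega)}$; since only $(B^h)'$ enters the decomposition, the additive constant in $B^h$ may be fixed so that also $\norm{B^h}_{L^2}\leq C_\omega\norm{\sym\nabla_h u^h}_{L^2}$, and Poincar\'e in the $\pro$-directions gives $\norm{\vartheta^h}_{L^2(\Omega)}\leq C\norm{(\partial_2\vartheta^h,\partial_3\vartheta^h)}_{L^2}\leq Ch\,\norm{\nabla_h\vartheta^h}_{L^2}$, which already yields $\vartheta^h\to0$ in $L^2$. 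For $B^h\to0$ I would use the remaining two hypotheses and the centredness~\eqref{eq:omegacentered} of $\omega$: a $3\times3$ skew matrix is determined by $B^h_{12},B^h_{13},B^h_{23}$, and testing the identity $u_1^h=\mathcal U^h_1+B^h_{12}x_2+B^h_{13}x_3+\vartheta^h_1$ against $x_2$ and against $x_3$ over $\omega$ gives $B^h_{12}=\bigl(\int_\omega x_2^2\,\td x'\bigr)^{-1}\bigl(\int_\omega x_2 u_1^h\,\td x'-\int_\omega x_2\vartheta^h_1\,\td x'\bigr)$ and likewise $B^h_{13}$, while integrating $x_3 u_2^h-x_2 u_3^h$ over $\omega$ gives $\twist(u_2^h,u_3^h)=c_\omega B^h_{23}+\int_\omega(x_3\vartheta^h_2-x_2\vartheta^h_3)\,\td x'$ with $c_\omega=\int_\omega(x_2^2+x_3^2)\,\td x'>0$; since $u_1^h\to0$, $\twist(u_2^h,u_3^h)\to0$ and $\vartheta^h\to0$ in $L^2$, all three entries of $B^h$ tend to $0$ in $L^2((0,L))$, and $o^h\to0$ follows by the same bookkeeping.

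The final, and genuinely delicate, point is to transfer equi-integrability: if $(\abs{\sym\nabla_h u^h}^2)$ is equi-integrable, then so should be $(\abs{(B^h)'}^2)$ and $(\abs{\nabla_h\vartheta^h}^2)$. The obstacle is that Griso's construction is bounded from $L^2$ to $L^2$ but not from $L^\infty$ to $L^\infty$, so equi-integrability is not inherited for free. The remedy is an Acerbi--Fusco/Zhang-type truncation (this is \cite[lemma~2.17]{MV16}): replace $\sym\nabla_h u^h$ by a field agreeing with it off a set of arbitrarily small measure and with equi-integrable square, run this modified field through Griso's (linear and $L^2$-continuous) construction, and estimate in $L^2$ the difference of the two outputs; that difference, supported essentially on the exceptional set, is precisely what yields the remainder $o^h$, while the good parts become equi-integrable by construction. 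This truncation step is the main obstacle of the argument; granting it, assembling the pieces above completes the proof, the rest being the standard Griso estimates together with the elementary cross-sectional identities used for $B^h\to 0$.
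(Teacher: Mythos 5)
The paper itself offers no proof of this proposition: it is stated in the appendix purely as a recollection of \cite[corollary~2.3, lemma~2.4]{MV16} (building on Griso's decomposition \cite{GrisoRod,GrisoThin}). Your primary move — cite \cite{MV16} and note it may be invoked directly — is therefore exactly what the paper does, and your identification of the truncation step \cite[lemma~2.17]{MV16} as the source of the equi-integrability transfer, as well as the explicit cross-sectional identities (using the centredness~\eqref{eq:omegacentered} and the hypotheses $u_1^h\to0$, $\twist(u_2^h,u_3^h)\to0$) that force $B^h\to0$, are all correct and useful ingredients.

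Your sketch, however, contains one genuine internal inconsistency at the place where the actual work of \cite{MV16} lies. You propose to establish $\vartheta^h\to0$ via the cross-sectional Poincar\'e bound $\norm{\vartheta^h}_{L^2}\leq C\norm{(\partial_2\vartheta^h,\partial_3\vartheta^h)}_{L^2}\leq Ch\norm{\nabla_h\vartheta^h}_{L^2}$; this requires $\vartheta^h$ to have zero cross-sectional mean (Griso's warping normalisation). But a few lines earlier you propose to absorb the ``$\sym$-exact, $x_1$-dependent parts'' — i.e.\ the shear/stretch term $\sym\iota(q^h)$ with $q^h=(\mathcal U^h)'-h^{-1}B^h e_1$, which Griso--Korn controls only in $L^2$ (bounded, not small) — into the same $\vartheta^h$. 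Any $x_1$-dependent correction has nonzero cross-sectional mean, so after absorbing it the Poincar\'e bound no longer applies; and the naive antiderivative of $q^h$ is merely bounded, not vanishing, in $L^2$. Putting $\sym\iota(q^h)$ directly into $o^h$ instead would require $q^h\to0$, which you have not argued and which does not follow from Griso's estimates alone. The correct handling replaces the antiderivative by a cross-sectionally oscillating corrector (roughly, $\vartheta^h_1\approx h(x_2 q^h_2+x_3 q^h_3)$, after mollifying $q^h$ so it may be differentiated, plus $\mathcal U^h_1 e_1$ for the first component), which does tend to $0$ in $L^2$ while keeping $\nabla_h\vartheta^h$ bounded, at the price of an additional $O(h)$ remainder landing in $o^h$; the regularisation/truncation bookkeeping is precisely what \cite[corollary~2.3]{MV16} supplies. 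So: your citation is the same as the paper's, the broad strokes of your sketch are right, but the specific Poincar\'e-plus-absorb-into-$\vartheta^h$ argument for $\vartheta^h\to0$ as written does not close, and is the one step that actually needs the construction from \cite{MV16}.
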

The reverse holds true as well:
\begin{prop}\label{prop:grisob}
Let $L >0$ and $\Omega = ( 0,L ) \times \omega$, where $\omega$ is an open, connected bounded Lipschitz-domain, which is centered at the origin in the sense of~\eqref{eq:omegacentered}.
Let $(B^h) \subset W^{1,2}((0,L), \R^{3\times3}_{\skewsym})$, $(\vartheta^h) \subset W^{1,2}(\Omega,\R^3)$ be sequences with $B^h \to 0$ strongly in $L^2((0,L), \R^{3\times3})$ and $\vartheta^h \to 0$ strongly in $L^2(\Omega,\R^3)$. 
Then there exists $(u^h) \subset W^{1,2}(\Omega, \R^3)$ with $\twist(u_2^h, u_3^h) \to 0$ in $L^2((0,L))$ and
\[
   (u_1^h, h u_2^h, h u_3^h) \to 0 \text{ strongly in } L^2(\Omega, \R^3)
\]
such that
\begin{equation*}\begin{aligned}
 \sym \nabla_h u^h = \sym \iota ( (B^h)' \pro) + \sym \nabla_h \vartheta^h.
\end{aligned}\end{equation*}
\end{prop}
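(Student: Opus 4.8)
The plan is to write $u^h$ down explicitly, mimicking the corrector sequence~\eqref{eq:definitioncorrector}. Set $\widehat B^h(x_1) := \int_0^{x_1} B^h(s)\,\td s \in W^{1,2}((0,L),\R^{3\times3})$ and define
\[
  u^h(x) := B^h \pro - \frac 1 h \paren*{ \widehat B^h_{12} e_2 + \widehat B^h_{13} e_3} + \vartheta^h .
\]
Since $\pro$ is a bounded linear function of $x'$ and $\widehat B^h \in W^{1,2}((0,L))$, we have $u^h \in W^{1,2}(\Omega,\R^3)$.

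First I would check the decomposition identity. As $(\widehat B^h)' = B^h$, exactly the computation following~\eqref{eq:definitioncorrector}, with $(B^h,\vartheta^h)$ in place of $(\Psi^h,v^h)$, gives
\[
  \nabla_h u^h = \iota((B^h)'\pro) + P^h + \nabla_h \vartheta^h, \qquad
  P^h = \begin{pmatrix} -\tfrac 1 h B^h_{12}e_2 - \tfrac 1 h B^h_{13}e_3 \;\Big|\; \tfrac 1 h B^h e_2 \;\Big|\; \tfrac 1 h B^h e_3 \end{pmatrix},
\]
the $\widehat B^h$-terms not contributing to $\partial_2 u^h$, $\partial_3 u^h$ since they are independent of $x_2,x_3$. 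The key observation is that, $B^h$ being skew, $B^h e_1 = -B^h_{12}e_2 - B^h_{13}e_3$, so $P^h = \tfrac 1 h B^h$ is itself skew-symmetric; hence $\sym P^h = 0$ and the required identity $\sym \nabla_h u^h = \sym \iota((B^h)'\pro) + \sym \nabla_h \vartheta^h$ holds exactly, with no error term (this is~\eqref{eq:sympsidecomp} with relabelled sequences).

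It then remains to verify the two normalization conditions, and here one uses the strong convergence $B^h,\vartheta^h \to 0$ in $L^2$. For $(u^h_1, h u^h_2, h u^h_3) \to 0$ strongly in $L^2$: from $B^h \to 0$ in $L^2((0,L))$ together with $\abs{\widehat B^h(x_1)} \leq \sqrt{x_1}\,\norm{B^h}_{L^2((0,L))}$ one gets $\widehat B^h \to 0$ uniformly on $[0,L]$, and then, using also $\vartheta^h \to 0$ in $L^2(\Omega)$ and the boundedness of $\omega$, inspection of the components of $u^h$ gives the claim. For the twist, using $B^h_{22}=B^h_{33}=0$ and $B^h_{32}=-B^h_{23}$,
\[
  x_3 u^h_2 - x_2 u^h_3 = B^h_{23}\,(x_2^2 + x_3^2) - \tfrac 1 h\paren*{ x_3 \widehat B^h_{12} - x_2 \widehat B^h_{13}} + (x_3 \vartheta^h_2 - x_2 \vartheta^h_3);
\]
integrating over $\omega$, the middle term drops out because $\omega$ is centered, so $\twist(u^h_2, u^h_3) = c_\omega B^h_{23} + \twist(\vartheta^h_2, \vartheta^h_3)$ with $c_\omega := \int_\omega (x_2^2 + x_3^2)\,\td x'$, and since $B^h_{23} \to 0$ in $L^2((0,L))$ and $\norm{\twist(\vartheta^h_2,\vartheta^h_3)}_{L^2((0,L))} \leq C_\omega \norm{\vartheta^h}_{L^2(\Omega)} \to 0$, this tends to $0$ in $L^2((0,L))$.

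I do not expect a genuine obstacle here; the only step requiring care is the last one, where both the skew-symmetry of $B^h$ and the centering condition~\eqref{eq:omegacentered} on $\omega$ must be used, and where one has to note that forming first moments in $x'$ preserves $L^2$-convergence, $x'$ being bounded on the bounded set $\omega$.
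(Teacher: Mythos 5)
Your proof is correct, and the construction is exactly the one the paper already employs for the corrector sequence in~\eqref{eq:definitioncorrector}: your $u^h$ is $\psi^h$ with $(\Psi^h,v^h)$ replaced by $(B^h,\vartheta^h)$, and the identity $\sym\nabla_h u^h = \sym\iota((B^h)'\pro) + \sym\nabla_h\vartheta^h$ is precisely~\eqref{eq:sympsidecomp}, relying as you note on the skew-symmetry of $B^h$. The paper itself does not reprove Prop.~\ref{prop:grisob} but cites \cite[Corollary~2.3, Lemma~2.4]{MV16}; your verification of the two convergence properties (via the uniform bound $\abs{\widehat B^h}\le\sqrt{L}\,\norm{B^h}_{L^2}$ and the centering of $\omega$) is sound and fills in the details cleanly.
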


\bibliographystyle{alpha}

 \end{document}